\theoremstyle{plain}
\newtheorem{thm}{Theorem}[section]
\newtheorem{proposition}[thm]{Proposition} 
\newtheorem{lemma}[thm]{Lemma}
\theoremstyle{definition}
\newtheorem{definition}[thm]{Definition}
\newtheorem{example}[thm]{Example}
\newtheorem{remark}[thm]{Remark}
\newtheorem{notation}[thm]{Notation}
\newtheorem{convention}[thm]{Convention}
\DeclareMathOperator{\Jac}{Jac}
\DeclareMathOperator{\Id}{Id}
\DeclareMathOperator{\str}{str}
\DeclareMathOperator{\sign}{sign}
\DeclareMathOperator{\Hom}{Hom}
\DeclareMathOperator{\res}{res}
\DeclareMathOperator{\qdim}{qdim}
\author{Timo Kluck\thanks{Email address: tkluck@infty.nl.}, 
	and Ana Ros Camacho\thanks{Email address: roscamachoa@cardiff.ac.uk \\ School of Mathematics, Cardiff University, Abacws, Senghenydd Road, Cardiff CF24 4AG, Wales}}
\title{Computational aspects of orbifold equivalence}
\begin{document}
	\maketitle
	
	\abstract{In this paper we study the computational feasibility of an algorithm to prove orbifold equivalence between potentials describing Landau--Ginzburg models.
		Through a comparison with state--of--the--art results of Gr\"obner basis computations
		in cryptology, we infer that the algorithm produces systems of equations
		that are beyond the limits of current technical capabilities. As such the
		algorithm needs to be augmented by `inspired guesswork', and we provide examples of applying this approach.}
	
	\section{Introduction}
	
	Initially a model to describe superconductivity, Landau--Ginzburg models were promoted in the late 80s to 2-dimensional $\left( 2,2 \right)$-supersymmetric sigma models completely characterized by a polynomial $W$ called \textit{potential} \cite{VW}. Landau--Ginzburg models gained importance in string theory and algebraic geometry as they form a family of quantum field theories which are related under homological mirror symmetry \cite{FJR,Wi}. Furthermore, they are connected to cohomological field theories via \cite{Vai}. This makes it natural to ask whether we can define some notion of ``equivalence'' between different potentials. The notion of orbifold equivalence was inspired by the study of (defects in) topological quantum field theories (see \cite{CR,DKR,FFRS}) and it was first defined in the context of the study of equivariant and orbifold completions of the bicategory of Landau--Ginzburg models. Several examples have been explored in detail in the recent years \cite{CRCR,NRC,NRC2,RW}, and its connection to other topics like the McKay correspondence \cite{Ion}.

	A further reason to study orbifold equivalences is that they may be used to generate examples of the so-called Landau--Ginzburg/conformal field theory (LG/CFT) correspondence (see e.g.\ \cite{RC} for a review). This physics result states that the infrared fixed point of a Landau--Ginzburg model with potential $f$ is a 2-dimentional rational conformal field theory (CFT) with central charge $c_f$. At the defects level, this predicts some relation between two seemingly different mathematical entities: matrix factorizations (which describe defects for Landau--Ginzburg models \cite{BR}) and representations of the vertex operator algebra of the CFT (describing defects for the rational CFT). We lack a precise mathematical statement for this result, yet there are several promising examples available of this correspondence. In the particular case of simple singularities, it was proven in \cite{CRCR} that via orbifold equivalence one finds exactly the predicted equivalences for the $N=2$ supersymmetric minimal models. Furthermore, there are physics results suggesting that this might not be the only case, involving Landau--Ginzburg models with potentials describing singularities of modality greater than 0 \cite{CdZ,Ma1,Ma2}. Hence, finding further orbifold equivalences is potentially a source of further examples of equivalences within the LG/CFT correspondence. This would strongly enhance our mathematical understanding of this intriguing physics result.
	
	The present paper is concerned with finding orbifold equivalences using computer search.
	The current state of the art is the algorithm presented in \cite{RW}. As recorded in Proposition \ref{mainprop}, this algorithm terminates if and only if two potentials are orbifold equivalent. In pertinent examples, we quantify the size of these computations, and
	compare these sizes to current bests in solving these systems: the Fukuoka
	MQ challenge \cite{MQ}. As such, we show that experimental infeasability was not an accident
	that can be solved by choosing a different implementation (as was speculated in \cite{RW})
	but that these computations lie well beyond what current technology enables.
	
	\subsection*{Acknowledgements}
	We would like to thank Gunther Cornelissen for guidance, careful advice, proof-reads, coffee and the beers on Friday. Through this work ARC has been supported by the NWO Veni Fellowship 639.031.758, the Marie Sk\l{}odowska-Curie Individual Fellowship ``MACOLAB'' with proposal number 747555 and by the French-German research project \href{http://www.math.polytechnique.fr/SISYPH/sisyph-en.htm}{SISYPH} (programme blanc ANR-13-IS01-0001-01/02, DFG Program DFG No HE 2287/4-1, SE 1114/5-1). She also especially thanks Nicola Cruz, Robin Perkins, Andrew Taggart and Alex Pall for creating the soundtrack that accompanied the writing of this paper.

	\section{Orbifold equivalence}
	
	In this section we introduce the necessary background for defining orbifold equivalence. For the reader more familiar with higher categories, we refer to Appendix \ref{catback} for a complete description of orbifold equivalence in the context of the bicategory of Landau--Ginzburg models.

	\paragraph*{Potentials}
	
	\begin{definition}
		Let $\mathbbm{k}$ be an algebraically closed field of characteristic zero. We will
		consider the category $\mathcal{R}$ of polynomial rings in a finite number
		of variables over $\mathbbm{k}$, each variable endowed with a fixed grading in $\mathbb{Q}_{> 0}$.
	\end{definition}
	
	Given $R \in \mathcal{R}$, we write
	\[
	R = \bigoplus_{q \in \mathbb{Q}_{\ge 0}} R_q
	\]
	for the equal-grading direct summands of $R$, and we call their
	elements \emph{quasi-homogeneous}. Note that $R_0 = \mathbbm{k}$.
	
	\begin{definition}
		For $R = \mathbbm{k}[x_1,\cdots, x_n] \in \mathcal{R}$ and $f \in R$, the \emph{Jacobian ideal} $I_f$ of $f$
		is the ideal generated by the partial derivatives of $f$:
		\[
		I_f = (\partial_{x_1} f, \cdots, \partial_{x_n} f)
		\]
		The \emph{Jacobian} of $f$ is $\Jac f = R/I_f$. We call $(R, f)$ a \emph{potential} if $f$
		is quasi-homogeneous and if $\Jac f$ is a non-zero finite-dimensional $\mathbbm{k}$-vector space.
		We often write $f$ to represent the pair $(R, f)$, and we
		may similarly write `let $f \in R$ be a potential'. We write $\mathcal{P}_\mathbbm{k}$ for
		the set of potentials.
		\label{potential}
	\end{definition}
	
	\begin{remark}
		The polynomial $f$ is quasi-homogeneous of degree $d \in \mathbb{Q}$ if and only if it satisfies:
		\[
		\frac{|x_1|}{d} x_1\partial_{x_1} f + \cdots + \frac{|x_n|}{d} x_n\partial_{x_n} f = f
		\]
		where $|x_i|$ denotes the degree associated to the variable $x_i$. In particular, this implies that $f \in I_f$.
		We have an interesting converse in the case of power series \cite{Sa}: there is a coordinate transformation
		making $f$ quasi-homogeneous if and only if $f \in I_f$.
		\label{quasihomogeneity}
	\end{remark}
	
	For future use, we record the following result.
	\begin{lemma}\label{power-of-m-in-I-f}
		If $f$ is a potential, then there exists an $N\in\mathbb{N}$ such that $(x_1,\cdots,x_q)^N \subseteq I_f$. \sloppy
	\end{lemma}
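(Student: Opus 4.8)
The plan is to exploit the grading. Since $f$ is quasi-homogeneous, say of degree $d$, each partial derivative $\partial_{x_i} f$ is again quasi-homogeneous (of degree $d - |x_i|$, with the convention that it vanishes if this number is negative), so the Jacobian ideal $I_f$ is homogeneous and $\Jac f = R/I_f$ inherits a $\mathbb{Q}_{\ge 0}$-grading, $\Jac f = \bigoplus_{q} (\Jac f)_q$ with $(\Jac f)_q = R_q / (I_f \cap R_q)$.

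First I would note that, since $\Jac f$ is finite-dimensional over $\mathbbm{k}$ and non-trivial (as $(R,f)$ is a potential), only finitely many of the graded pieces $(\Jac f)_q$ are nonzero, so the top degree $D := \max\{\, q \in \mathbb{Q}_{\ge 0} : (\Jac f)_q \ne 0 \,\}$ is a well-defined finite number; equivalently $R_q \subseteq I_f$ for all $q > D$. Next, write $\mu := \min_i |x_i|$; this is a strictly positive rational because there are finitely many variables, each with grading in $\mathbb{Q}_{>0}$. (If $R$ has no variables the statement is vacuous, so assume $n \ge 1$.) Now pick any integer $N$ with $N\mu > D$. The ideal $(x_1,\dots,x_n)^N$ is generated over $R$ by the monomials $x_{i_1}\cdots x_{i_N}$, each of which is quasi-homogeneous of degree $|x_{i_1}| + \cdots + |x_{i_N}| \ge N\mu > D$; hence every element of $(x_1,\dots,x_n)^N$ lies in $\bigoplus_{q > D} R_q \subseteq I_f$, which is the claim.

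I do not expect a genuine obstacle here: the point is simply that finite-dimensionality of $\Jac f$ bounds its grading from above, while positivity of the weights bounds the grading occurring in a large power of $(x_1,\dots,x_n)$ from below, and the two bounds cross once $N$ is large enough. If one preferred to avoid the graded structure, an alternative is to observe that a finite-dimensional $\mathbbm{k}$-algebra is Artinian, hence has nilpotent Jacobson radical; since $R$ is graded with $R_0 = \mathbbm{k}$, the ring $\Jac f$ is local with maximal ideal the image of $(x_1,\dots,x_n)$, and nilpotence of that maximal ideal is exactly the assertion — but this route forces one to check the locality statement, which the graded argument sidesteps.
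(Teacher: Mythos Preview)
Your argument is correct and in fact cleaner than the paper's. The paper's proof does not use the weighted grading directly: instead, for each variable $x_i$ it fixes a lexicographic monomial order with $x_i$ smallest, takes a quasi-homogeneous Gr\"obner basis of $I_f$, and argues that finite-dimensionality forces some $x_i^{M_i}$ to occur as a leading term, hence $x_i^{M_i}\in I_f$; it then finishes with a pigeonhole on \emph{total} (unweighted) degree, taking $N>q\sum_i M_i$. Your route bypasses Gr\"obner bases entirely by observing that finite-dimensionality of $\Jac f$ bounds the top graded piece by some $D$, and that any length-$N$ monomial has weighted degree at least $N\mu$ with $\mu=\min_i|x_i|>0$, so $N>D/\mu$ suffices. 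The paper's version has the advantage, in the context of that article's computational aims, of making the Gr\"obner step explicit (it is reused in Lemma~\ref{matrix-for-residue}); yours has the advantage of being shorter, giving a sharper and more transparent bound, and isolating exactly the two hypotheses that matter (homogeneity of $I_f$ and finite-dimensionality of $R/I_f$). The alternative Artinian argument you sketch at the end is also valid but, as you note, slightly less direct.
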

	\begin{proof}
		This only uses the facts that $I_f$ is quasi-homogeneous (i.e.\ for every $g \in I_f$ with quasi-homogeneous decomposition $g = \sum\limits_\ell g_\ell$, we have $g_\ell \in I_f$ for all $\ell$)
		and that $R/I_f$ is finite dimensional over $\mathbbm{k}$.
		
		Pick a variable $x_i$. We will first prove that $x_i^{M_i} \in I_f$ for some $M_i$. For this,
		pick a lexicographical monomial order such that $x_i$ is smaller than all other variables. Under this order,
		$x_i^M$ ($M\ge 1$) can only be a leading monomial of a polynomial $g$ if $g$ is a function of
		only $x_i$ and no other variables. Let $G$ be a Gr\"obner basis of $I_f$ with respect
		to this monomial order. Because $I_f$ is quasi-homogeneous, we may choose $G$ such that every $g\in G$ is quasi-homogeneous as well.

		Because $R/I_f$ is finite-dimensional, for large $M$, $x_i^M$ must be reducible
		by $G$. That means $G$ contains a divisor of $x_i^M$ as a leading monomial, and
		we write $M_i$ so that $x_i^{M_i}$ is a leading monomial of some $g \in G$. But
		with the chosen monomial order $g$ is a function of only $x_i$, and with $g$ being
		quasi-homogeneous, we find $g = c x_i^{M_i}$ for some $c\in \mathbbm{k}^*$. Then $x_i^{M_i} \in I_f$.
		
		To see that $(x_1,\cdots,x_q)^N \subseteq I_f$, we need to show that
		monomials of total degree $N$ are in $I_f$ for large enough $N$. But for
		\[
		N > q \sum\limits_i M_i
		\]
		at least one variable $x_i$ has, in such a monomial, an exponent greater than $M_i$, and so
		the monomial is a multiple of $x_i^{M_i} \in I_f$. It is therefore an element of $I_f$.
	\end{proof}
	
	\paragraph*{Graded modules}
	
	\begin{convention}
		While $R$ has a grading with values in $\mathbb{Q}_{\ge 0}$, \emph{graded $R$-modules}
		have a $\mathbb{Q}$-grading.
	\end{convention}
	
	\begin{definition}
		For $q \in \mathbb{Q}$ we define the graded $R$-module $R(n)$ ($n\in\mathbb{Q})$ as follows.
		As a non-graded $R$-module, it is isomorphic to $R$, and its grading is given by
		\[
		R(n)_m = R_{n+m} .
		\]
	\end{definition}
	
	A choice of grading on two $R$-modules induces a unique grading on the space of
	maps between such modules.
	Let us make this explicit for maps from $R(n)$ to $R(m)$. As non-graded modules we have
	\[
	\Hom_R(R(n), R(m)) \cong \Hom_R(R, R) \cong R .
	\]
	Comparing the quasi-homogeneous components of the left hand side and the right hand side,
	one readily obtains the following explicit form:
	\begin{eqnarray}
		\Hom_R(R(n), R(m))_\ell \cong R_{m-n+\ell} .    
		\nonumber
	\end{eqnarray}
	
	\begin{convention}
		We use the term \emph{quasi-homogeneous map} for maps of any
		degree, whereas \emph{morphism} is reserved for quasi-homogeneous maps of degree zero.
		\label{conv-degree}
	\end{convention}
	
	In particular, this convention implies that even though there is an invertible quasi-homogeneous map between $R(n)$ and $R(m)$ for any $n,m$, they are iso\emph{morphic} if and only if $n=m$.
	
	\begin{definition}
		A \emph{finitely generated, free, graded $R$-module $X$} is a graded $R$-module $X$
		that has a decomposition
		\begin{eqnarray}
			X \cong R(n_1) \oplus \cdots \oplus R(n_\ell)  
			\nonumber
		\end{eqnarray}
		for some $n_1 \ge \cdots \ge n_\ell \in \mathbb{Q}$.
	\end{definition}
	
	The choice of such a decomposition is equivalent to the choice of an
	$R$-basis consisting of quasi-homogeneous elements.
	
	\paragraph*{Multi-variate residues}

	We will make use of the multi-variate residue symbol as described by Lipman \cite{Lip}. It is completely characterized by three simple facts that we describe in this 
	section. With a view towards our computational objective, we will
	prove that this characterization is effective, i.e.\ it gives an algorithm
	for computing it.
	
	These three facts are as follows:
	\begin{itemize}
		\item[(F1)]
		\[
		\res\left ( \frac{ g \mathrm{d}x_1 \wedge \cdots \wedge \mathrm{d}x_q }{ f_1, \cdots, f_q} \right ) = 0 \mbox{ if } g \in (f_1,\cdots,f_q)
		\]
		\item[(F2)]
		\[
		\res\left ( \frac{ g \mathrm{d}x_1 \wedge \cdots \wedge \mathrm{d}x_q }{ x_1^{d_1}, \cdots, x_q^{d_q}} \right ) = \left ( \mbox{ the } x_1^{d_1-1}\cdots x_q^{d_q-1} \mbox{-coefficient of } g \right )
		\]
		for all $d_1,\cdots,d_q \in \mathbb{N}$.
		\item[(F3)] The transformation rule:
		\[
		\res\left ( \frac{ g \det(M) \mathrm{d}x_1 \wedge \cdots \wedge \mathrm{d}x_q }{ M(f_1, \cdots, f_q)} \right ) = \mathrm{res}\left ( \frac{ g \mathrm{d}x_1 \wedge \cdots \wedge \mathrm{d}x_q }{ f_1, \cdots, f_q} \right )
		\]
		for any $R$-linear transformation $M\colon R^q \to R^q$.
	\end{itemize}
	
	\begin{remark}
		Note that (F3) preserves the applicability of (F1): if $g \in (f_1,\cdots,f_q)$, then also $g\det(M)\in (Mf_1,\cdots,Mf_q)$. Namely, write $f = (f_1,\cdots,f_q)$ and suppose $g=\beta f$ for some $R$-linear $\beta\colon R^q \to R$. Writing $M^\dagger$ for the adjoint of $M$, we have $M^\dagger M = \det(M)\Id$, and so we can write $g\det(M) = (\beta M^\dagger)(Mf)$, which expresses $g\det(M)$ in the generators of $Mf = (Mf_1,\cdots,Mf_q)$.
	\end{remark}
	
	These facts suffice to compute any residue symbol:
	
	\begin{lemma}\label{matrix-for-residue}
		Let $R \in \mathcal{R}$ and let $f_1,\cdots,f_q \in I$ be generators for an ideal $I \subseteq R$ such that $(x_1,\cdots, x_n)^N \subseteq I$ for some $N \in \mathbb{N}$. Then there exists a $q\times q$ matrix $M$ with coefficients in $R$ such that for every $i$, $\sum_j M_{ij}f_j = x_i^{d_i}$ for some $d_i\in \mathbb{N}$. Moreover, this matrix can be computed explicitly.
	\end{lemma}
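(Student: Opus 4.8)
The plan is to reduce the statement to \emph{constructive ideal membership}. The existence of $M$ will be immediate from the hypothesis; the substance is to make the coefficients explicit, which amounts to writing certain prescribed elements of $I$ in terms of the generators $f_1,\dots,f_q$ — a task solved by Gr\"obner basis methods.

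First I would dispose of existence: for each $i \in \{1,\dots,q\}$ the monomial $x_i^N$ lies in $(x_1,\dots,x_n)^N \subseteq I = (f_1,\dots,f_q)$, so one may take $d_i = N$ (or, if smaller exponents are wanted, the least $d_i$ with $x_i^{d_i} \in I$, which the division step below detects). The $i$-th row of $M$ is then any tuple witnessing $x_i^{d_i} = \sum_j M_{ij} f_j$, and it remains only to produce such tuples effectively.

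For that I would run Buchberger's algorithm on $f_1,\dots,f_q$ in its \emph{extended} form, relative to any fixed monomial order on $R$: this returns a Gr\"obner basis $G = \{g_1,\dots,g_m\}$ of $I$ together with a matrix $B = (B_{kj}) \in R^{m\times q}$ satisfying $g_k = \sum_j B_{kj} f_j$, obtained by carrying the coefficient vectors along through every $S$-polynomial and every reduction step. Then, for each $i$, I would divide $x_i^{d_i}$ by $G$ with the multivariate division algorithm; since $x_i^{d_i}\in I$ and $G$ is a Gr\"obner basis, the remainder is $0$ and the algorithm outputs quotients $a_1,\dots,a_m \in R$ with $x_i^{d_i} = \sum_k a_k g_k$. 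Substituting $g_k = \sum_j B_{kj} f_j$ gives $x_i^{d_i} = \sum_j \bigl(\sum_k a_k B_{kj}\bigr) f_j$, so $M_{ij} := \sum_k a_k B_{kj}$ is the desired matrix; every step terminates and uses only polynomial arithmetic, so $M$ is genuinely computed.

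I do not expect a real obstacle: everything beyond invoking Buchberger's algorithm and multivariate division is bookkeeping. The single point where the hypothesis is actually used — rather than the construction being purely formal — is that the division of $x_i^{d_i}$ by $G$ leaves remainder zero, which is exactly guaranteed by $x_i^{d_i}\in I$, i.e.\ by $(x_1,\dots,x_n)^N \subseteq I$ combined with the defining property of a Gr\"obner basis. It is also worth recording that $M$ need not be invertible over $R$: only the identity $M\,(f_1,\dots,f_q)^{\mathsf T} = (x_1^{d_1},\dots,x_q^{d_q})^{\mathsf T}$ is needed, after which the residue symbol attached to $f_1,\dots,f_q$ is evaluated by feeding $M$ into the transformation rule (F3) and then applying the monomial formula (F2).
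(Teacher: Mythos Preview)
Your proof is correct and follows essentially the same route as the paper: both reduce the claim to constructive ideal membership via Gr\"obner bases, with your version spelling out in detail (extended Buchberger plus multivariate division with tracked coefficients) what the paper compresses into the phrase ``this computation yields the coefficients $M_{ij}$''. The paper additionally points back to the lexicographic-order argument of its preceding lemma to locate each $d_i$, but this is a cosmetic difference rather than a distinct strategy.
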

	\begin{proof}
		The assumption guarantees that for every $i$, some power $x_i^{d_i}$ is an element of $I$, and this
		power $d_i$ can be found algorithmically by a Gr\"obner basis computation as outlined in the
		proof of Lemma \ref{power-of-m-in-I-f}. This computation yields the coefficients
		$M_{ij}$ for all $j$. Repeating the computation for all $i$ yields the matrix $M$.
	\end{proof}
	
	\begin{proposition}\label{res-can-be-computed}
		For given $g \in R$ and $I = (f_1,\cdots,f_q)$ such that $R/I$ is finite-dimensional,
		the residue symbol
		\[
		\res\left ( \frac{ g \mathrm{d}x_1 \wedge \cdots \wedge \mathrm{d}x_q }{ f_1, \cdots, f_q} \right )
		\]
		can be computed algorithmically.
	\end{proposition}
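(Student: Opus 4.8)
The plan is to reduce an arbitrary residue symbol to the special form handled by (F2), using (F3) to perform the reduction and Lemma \ref{matrix-for-residue} to produce the explicit transformation matrix. First I would observe that since $R/I$ is finite-dimensional, Lemma \ref{power-of-m-in-I-f} (applied to the potential situation — or rather the same Gr\"obner argument, which only needs finite-dimensionality of $R/I$ and not that $I$ is literally a Jacobian ideal) guarantees that $(x_1,\dots,x_q)^N \subseteq I$ for some $N$. Hence the hypothesis of Lemma \ref{matrix-for-residue} is met, and that lemma yields, algorithmically, a $q\times q$ matrix $M$ over $R$ with $\sum_j M_{ij} f_j = x_i^{d_i}$ for suitable exponents $d_i \in \mathbb{N}$; write $M(f_1,\dots,f_q) = (x_1^{d_1},\dots,x_q^{d_q})$.

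Next I would apply the transformation rule (F3) with this $M$: it gives
\[
\res\left( \frac{ g \det(M)\, \mathrm{d}x_1 \wedge \cdots \wedge \mathrm{d}x_q }{ x_1^{d_1},\cdots,x_q^{d_q} } \right) = \res\left( \frac{ g\, \mathrm{d}x_1 \wedge \cdots \wedge \mathrm{d}x_q }{ f_1,\cdots,f_q } \right),
\]
so it remains to compute the left-hand side. The polynomial $\det(M)$ is explicitly computable from the entries of $M$, hence so is the product $g\det(M)$. Now (F2) applies directly: the left-hand residue equals the coefficient of the monomial $x_1^{d_1-1}\cdots x_q^{d_q-1}$ in $g\det(M)$, which one extracts by inspection. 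Chaining these steps gives an explicit value for the original residue symbol, proving the claim.

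The main obstacle is really the bookkeeping in the previous two lemmas rather than anything in this proof: one must be sure that the finite-dimensionality hypothesis here genuinely supplies the inclusion $(x_1,\dots,x_q)^N \subseteq I$ needed by Lemma \ref{matrix-for-residue}, since Lemma \ref{power-of-m-in-I-f} was phrased for Jacobian ideals of potentials. I would note that its proof uses only that $I$ is an ideal with $R/I$ finite-dimensional — the quasi-homogeneity of $I$ was used merely to simplify the Gr\"obner basis, and can be dropped at the cost of allowing $g \in G$ with leading term $x_i^{M_i}$ that is not a pure power but whose other terms are reducible, so one still obtains $x_i^{M_i} \in I$ after further reduction. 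With that remark in place, the rest is a direct, algorithmic chaining of (F3) and (F2), and the fact that each intermediate object ($M$, $\det M$, $g\det M$, the extracted coefficient) is explicitly computable is immediate.
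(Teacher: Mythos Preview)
Your argument is essentially the paper's own proof: invoke Lemma~\ref{matrix-for-residue} to produce $M$, apply (F3), then read off the answer via (F2). The only difference is that the paper first tests whether $g \in I$ and returns $0$ via (F1) in that case; your route handles this implicitly, since if $g \in I$ then $g\det(M) \in (x_1^{d_1},\dots,x_q^{d_q})$ and the relevant coefficient in (F2) vanishes automatically.

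One caution about your closing remark. Your proposed fix for dropping quasi-homogeneity in Lemma~\ref{power-of-m-in-I-f} does not work as stated: in a reduced Gr\"obner basis, an element with leading term $x_i^{M_i}$ has its lower-order terms \emph{irreducible} by the rest of the basis, so ``further reduction'' cannot produce the pure power $x_i^{M_i}$. Indeed the conclusion $(x_1,\dots,x_q)^N \subseteq I$ is simply false for general finite-codimension $I$ (take $I=(x-1)$ in $\mathbbm{k}[x]$). This is a wrinkle in the proposition's generality rather than in your argument per se---the paper's proof invokes Lemma~\ref{matrix-for-residue} under the same tacit assumption, and in every application in the paper $I$ is the Jacobian ideal of a potential, where Lemma~\ref{power-of-m-in-I-f} applies directly---but you should not claim to have repaired it.
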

	\begin{proof}
		Write $I = (f_1, \cdots, f_q)$. We first compute a Gr\"obner basis $G$ of $I$.
		Then, we check whether $g \in I$. If it is, the residue is $0$ and we have finished
		the computation.
		
		If $g \not\in I$, then we compute the matrix $M$ such that $M\cdot(f_1,\cdots,f_q)$
		consists of a vector of monomials (Lemma \ref{matrix-for-residue}). We can then use
		(F3) to replace $g$ by $g \det(M)$, and (F2) to compute the
		residue as the appropriate coefficient of $g \det(M)$.
	\end{proof}

	\paragraph*{Matrix factorizations}
	
	\begin{definition}
		A finitely generated, free, graded $R$-module $X$ is \emph{supergraded}
		if it has a decomposition 
		\[
		X = X_+ \oplus X_-
		\]
		into
		an \emph{even} and \emph{odd} part, respectively, both of which are f.g., free, graded
		$R$-modules themselves.
	\end{definition}
	
	\begin{convention}
		There is some risk of confusion from using two gradings: the $\mathbb{Q}$-grading on $R$-modules
		and maps between them is not to be confused with the supergrading on
		$X_+ \oplus X_-$. These are our conventions:
		\begin{itemize}
			\item We use `grade', `grading', and `quasi-homogeneous' exclusively to refer to the
			$\mathbb{Q}$-grading. We use `even' and `odd' exclusively to refer
			to the supergrading. We use `even/odd' for super-homogeneity.
			\item Just like in the case of the $\mathbb{Q}$-grading (see Convention \ref{conv-degree}), maps may be even or odd, but morphisms are assumed even.
			\item We use the Koszul sign rule for tensor products of supergraded modules.
			In order to highlight its effect on the trace operator, we write $\str$
			or \emph{supertrace} to emphasize this. Explicitly, it is given by
			\[
			\str e_i \otimes e^j = (-1)^{\sign(e_i)\sign(e^j)} \delta_i^j
			\]
			for a basis $\{e_i\}_i$ with dual basis $\{e^i\}_i$.
		\end{itemize}
	\end{convention}
	
	\begin{definition}
		Let $f \in R$ be a potential. A \emph{matrix factorization of $f$} is a finitely generated,
		graded, supergraded $R$-module $X$ together with an odd,
		homogeneous map $d_X$ such that $d_X^2 = f \cdot \Id_X$.
		\label{MFs}
	\end{definition}
	
	\begin{notation}
		We will write $X$ to represent the pair $(X,d_X)$ from this definition.
	\end{notation}
	
	\paragraph*{Orbifold equivalence}
	
	\begin{definition}
		Let two potentials $f \in R$ and $g \in S$ be given.
		Write $T=R\otimes_\mathbbm{k} S$. Then a \emph{matrix factorization of $f - g$}
		is a matrix factorization $Q$ over $T$ of the potential
		\[
		f \otimes 1 - 1 \otimes g \in T
		\]
	\end{definition}
	
	Note that the existence of $Q$ implies that $f$ and $g$ have the same grading, since $d_Q$ and therefore $d_Q^2$ are quasi-homogeneous endomorphisms by assumption, and therefore so is $(f-g)\cdot\Id_Q$.
	
	\begin{definition}
		Let $f \in \mathbbm{k}[x_1,\cdots,x_m]$, $g \in \mathbbm{k}[y_1,\cdots, y_n]$, and
		$Q$ a matrix factorization of $f - g$. Its \emph{quantum dimension with respect to $f$} is
		\[
		\qdim_f Q = \res\left ( \frac{ \str \partial_{x_1} Q \cdots \partial_{x_m} Q \partial_{y_1} Q \cdots \partial_{y_n} Q \mathrm{d}x_1\wedge\cdots\wedge\mathrm{d}x_m}{\partial_{x_1}f, \cdots, \partial_{x_m}f } \right )
		\]
		The \emph{left} and \emph{right quantum dimensions} are, respectively, the quantum dimensions w.r.t.\ $f$ and w.r.t.\ $g$.
		\label{qdims}
	\end{definition}
	
	\begin{remark}
		Since at present we are only interested in the (non)zero-ness of quantum dimensions, we omit the signs \cite{CM,CRCR}.
	\end{remark}

	\begin{definition}
		The potentials $f$ and $g$ are \emph{orbifold equivalent} if there is a matrix factorization of $f - g$ with nonzero left and right quantum dimensions.
		\label{defnorbeq}
	\end{definition}
	
	It is not quite trivial to see that this is an equivalence relation; in fact, even reflexivity already requires a rather complicated matrix factorization $Q$. Similarly, transitivity is `almost' easy to obtain, namely through a suitably defined tensor product of bimodules, but this results in a module that is not finitely generated. The hard part is obtaining the desired finitely generated one from this starting point.
	
	Here, we will content ourselves with citing the result, contained at Section 2.1 of \cite{CRCR}:
	
	\begin{thm}
		Orbifold equivalence is an equivalence relation on the set of potentials $\mathcal{P}_\mathbbm{k}$.
		\qed
	\end{thm}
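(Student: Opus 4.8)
The plan is to establish the three axioms of an equivalence relation --- reflexivity, symmetry, and transitivity --- for orbifold equivalence on $\mathcal{P}_\mathbbm{k}$, in increasing order of difficulty, following the argument of Section 2.1 of \cite{CRCR}.

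For \emph{symmetry}, suppose $Q$ is a matrix factorization of $f-g$ with nonzero left and right quantum dimensions. I would produce from it a matrix factorization $Q^\vee$ of $g-f$ by passing to the dual supergraded module and the adjoint differential (with an appropriate sign/shift to keep it odd and to get $d_{Q^\vee}^2 = (g-f)\cdot\Id$). The residue formula in Definition \ref{qdims} is symmetric enough in its inputs that $\qdim_g Q^\vee$ and $\qdim_f Q^\vee$ are, up to nonzero scalars we are ignoring anyway, the right and left quantum dimensions of $Q$; hence both are nonzero and $g$ is orbifold equivalent to $f$.

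For \emph{reflexivity}, the point is that $f$ is orbifold equivalent to itself via a nontrivial matrix factorization of $f \otimes 1 - 1\otimes f$ --- one cannot use the ``identity'' naively since that is not finitely generated, but there is an explicit finite-rank Koszul-type matrix factorization $\Delta_f$ (built from the $x_i - y_i$ and the ``finite difference quotients'' $\frac{f(x)-f(y_1,\dots,y_{i-1},x_i,\dots)-\cdots}{x_i-y_i}$ arranged into a Koszul complex) whose left and right quantum dimensions are both equal to $1$ (this is essentially the statement that the residue pairing is nondegenerate / a unit). I would cite this computation rather than redo it.

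The main obstacle, and the step I expect to be genuinely hard, is \emph{transitivity}: given $Q$ a matrix factorization of $f-g$ and $P$ of $g-h$, one wants a matrix factorization of $f-h$ with nonzero quantum dimensions. The natural candidate is the ``tensor product over the middle variables'' $P \otimes_{S} Q$ (convolution of the two differentials), but this is an infinitely generated $R\otimes_\mathbbm{k} T$-module, so one must show it is isomorphic, in the homotopy category of matrix factorizations, to a finitely generated one --- this uses that matrix factorizations of a potential admit finite-rank models because the relevant Jacobian ring is finite-dimensional (cf.\ Lemma \ref{power-of-m-in-I-f}), together with a standard ``eventually the complex becomes contractible'' argument. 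One then has to check that the quantum dimension of the composite is (a nonzero multiple of) the product $\qdim(P)\cdot\qdim(Q)$, which is the multiplicativity of quantum dimensions under composition of defects; this is where the bulk of the machinery in \cite{CRCR} lives. Since the statement here is presented purely as a citation, my ``proof'' would consist of assembling these three ingredients and pointing to \cite[Section 2.1]{CRCR} for the finiteness reduction and the multiplicativity computation, which are the technical heart.

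\qed
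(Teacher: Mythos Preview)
Your proposal is correct and matches the paper's approach exactly: the paper does not prove this theorem at all but simply cites Section~2.1 of \cite{CRCR}, and you do the same while additionally supplying an accurate informal sketch of the three ingredients (dual for symmetry, diagonal Koszul factorization $\Delta_f$ for reflexivity, tensor product with finite-rank reduction and multiplicativity of quantum dimensions for transitivity). If anything, you give more detail than the paper itself.
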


	\section{Search algorithm}
	\label{search-algorithm}
	
	Our task is as follows: given potentials $f\in R$ and $g\in S$, find out whether they are orbifold equivalent. We will present an algorithm that
	finishes in finite time if they are. It is not a decision procedure, however: the algorithm does not terminate if they are not. This section offers an exposition
	of parts of \cite{RW}, tailored towards our use in Section \ref{feas}.
	
	Let's first describe an easy instance of the algorithm.
	
	\begin{example}
		Assume the following potentials to be quasi-homogeneous of degree 2. Out of reflexivity of equivalence relations, it is clear that $x^3$ is orbifold equivalent to $y^3$, but let us analyze this case as an illustration. One way of finding an orbifold equivalence is splitting the total grading $2$ into $\frac{4}{3}+\frac{2}{3}$ and then
		writing the most general rank $2$ odd matrix with entries of those gradings respectively:
		\[
		d_Q = \left ( \begin{array}{cc} 0 & c_1 x + c_2 y \\ c_3 x^2 + c_4 xy +c_5 y^2 & 0 \end{array}\right )
		\]
		with indeterminates $c_1,\cdots,c_5 \in \mathbbm{k}$. Then the equation
		\[
		d_Q^2 = (x^3 - y^3)\cdot\Id_Q
		\]
		is equivalent to a set of equations in the variables $c_1,\cdots,c_5$. In detail, we
		find $4$ distinct quadratic equations -- one for each degree-$3$ monomial -- in $5$ variables.
		
		We add to these equations the requirement that the quantum dimensions do not vanish.
		Thanks to Proposition \ref{res-can-be-computed}, we can compute e.g.\ the left quantum dimension.
		It is a polynomial $q_\ell$ in $c_1,\cdots,c_5$, namely
		\[
		q_\ell = -\frac{2}{3}c_2 c_3 + \frac{1}{3}c_1 c_4
		\]
		Following \cite{RW}, we encode
		the non-vanishing by adding a helper variable $c_\ell$ and adding
		\[
		c_\ell q_\ell - 1 = 0
		\] to our equations. This has at least one solution for $c_\ell,c_1,\cdots,c_5$ if and only
		if the original system has at least one solution for which $q_l$ does not vanish.
		
		Adding two such equations, for left and right quantum dimension respectively, we find
		$6$ equations in $7$ variables, and if they admit a solution in $\mathbbm{k}^7$, we found a
		matrix factorization proving orbifold equivalence of $x^3$ and $y^3$.
		
		The existence of such a solution can be established or refuted, thanks
		to the weak Nullstellensatz, by checking whether the ideal generated by these equations is
		not equal to the trivial ideal $(1)$. Algorithmically, this can be decided by computing a Gr\"obner basis.
	\end{example}
	
	It is straightforward to generalize this example to a search procedure. For
	this, we note the following:
	
	\begin{itemize}
		\item
		There are only countably many ranks $2m \in 2\mathbb{N}$ for $Q$;
		\item
		For every $m$, we can enumerate the possible gradings $(n_1,\cdots,n_{2m})$ of the free summands in
		\[
		Q = R(n_1) \oplus \cdots \oplus R(n_{2m})
		\]
	\end{itemize}
	Through a standard diagonal procedure, we can enumerate the union of all modules appearing
	in this way. The gradings $n_1,\cdots, n_{2m}$ fix the grading of the entries in $d_Q$ through
	$|(d_Q)_{ij}| = n_j - n_i + |d_Q|$. Then a `most general' version of $(d_Q)_{ij}$ for
	these gradings is given by a polynomial with
	\[
	\dim_\mathbbm{k} T_{n_j - n_i + |d_Q|}
	\]
	free variables at the $(i,j)$ entry -- one free variable for every quasi-homogeneous
	monomial of grading $n_j - n_i + |d_Q|$ in $T$.
	
	Having found this most general form, we compute the coefficient equations from the matrix equation
	\[
	d_Q^2 = (f - g)\cdot\Id
	\]
	Suppose they are given by
	\[
	\{ s_i(c_1,\cdots,c_N) = 0 \}_{i \in S}
	\]
	for some finite index set $S$. We augment this set with the two equations
	\begin{eqnarray*}
		c_\ell q_\ell(c_1,\cdots,c_N) - 1 &=& 0 \\
		c_r q_r(c_1,\dots,c_N) -1 &=& 0
	\end{eqnarray*}
	Just like in the example, the weak Nullstellensatz implies that determining whether
	these allow a simultaneous solution in $\mathbbm{k}^{N+2}$ is a finite computation.
	
	We can summarize the discussion above in the following result:
	
	\begin{proposition}
		There is an algorithm that, given two potentials $f \in \mathbbm{k}[x_1,\cdots,x_q]$ and $g \in \mathbbm{k}[y_1,\cdots,y_n]$, terminates if and only if $f$ and $g$ are orbifold equivalent. \qed
		\label{mainprop}
	\end{proposition}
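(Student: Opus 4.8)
The plan is to formalize the enumeration procedure sketched just before the statement and verify the two directions of the ``if and only if''.

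\textbf{Setup of the enumeration.} First I would fix, once and for all, a diagonal enumeration of the set of candidate data: a pair consisting of a rank $2m$ and a tuple of gradings $(n_1,\ldots,n_{2m}) \in \mathbb{Q}^{2m}$. Since $\mathbb{Q}$ is countable, $\bigcup_m \mathbb{Q}^{2m}$ is countable, and one can list its elements $D_1, D_2, D_3, \ldots$ so that each appears at least once. For each $D_k$ I would carry out the finite procedure described in the text: form the supergraded module $Q = T(n_1)\oplus\cdots\oplus T(n_{2m})$, write the most general odd homogeneous $d_Q$ of the forced degree $|d_Q|$ (the degree of $f-g$ split over the summands), with one indeterminate $c_1,\ldots,c_N$ per quasi-homogeneous monomial in each entry — here $N$ is finite because each graded piece $T_\ell$ is finite-dimensional, as $T$ is a finitely generated graded ring with positive gradings. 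Expanding $d_Q^2 - (f-g)\Id_Q$ and collecting monomial coefficients gives a finite system $\{s_i(c_1,\ldots,c_N)=0\}$; adjoining the two Rabinowitsch equations $c_\ell q_\ell - 1 = 0$ and $c_r q_r - 1 = 0$, where $q_\ell, q_r$ are the left and right quantum dimensions — polynomials in the $c_j$, computable by Proposition~\ref{res-can-be-computed} — yields a finite system in $N+2$ variables over $\mathbbm{k}$. By the weak Nullstellensatz this system is solvable in $\mathbbm{k}^{N+2}$ if and only if the ideal it generates is proper, which is decidable by a Gröbner basis computation. The algorithm is: for $k = 1, 2, 3, \ldots$, run this decision procedure on $D_k$; halt and output ``orbifold equivalent'' the first time a solvable system is found.

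\textbf{Correctness, forward direction.} If the algorithm terminates, it has found, for some $D_k$, a solution $(c_1,\ldots,c_N,c_\ell,c_r) \in \mathbbm{k}^{N+2}$. The first $N$ coordinates specialize $d_Q$ to an actual odd homogeneous endomorphism with $d_Q^2 = (f-g)\Id_Q$, i.e.\ a matrix factorization of $f-g$; the equations $c_\ell q_\ell = 1$ and $c_r q_r = 1$ force $q_\ell \neq 0$ and $q_r \neq 0$, so both quantum dimensions are nonzero. By Definition~\ref{defnorbeq}, $f$ and $g$ are orbifold equivalent.

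\textbf{Correctness, converse — the main obstacle.} Conversely, suppose $f$ and $g$ are orbifold equivalent, witnessed by some matrix factorization $Q$ with nonzero left and right quantum dimensions. The crux is to argue that $Q$ (or a matrix factorization equivalent to it for our purposes) appears among the finitely-many candidates tested for some $D_k$ — i.e.\ that it is, up to graded isomorphism, a finitely generated free supergraded module $T(n_1)\oplus\cdots\oplus T(n_{2m})$ with a homogeneous differential, so that its differential is a specialization of our generic $d_Q$ for that $D_k$. This is where I expect the real work. The rank being even and the module being free and finitely generated are part of the definition of matrix factorization as given in Definition~\ref{MFs}, and finitely-generated graded projective modules over a graded polynomial ring are free, so the module side is fine; what one must still check is that homogeneity of $d_Q$ (degree $|f-g|$, i.e.\ the common degree of $f$ and $g$) can be assumed — but this too is part of Definition~\ref{MFs}. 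Thus any witnessing $Q$ has exactly the shape enumerated, with some grading tuple $(n_1,\ldots,n_{2m})$ which equals $D_k$ for some $k$; its differential provides a point of $\mathbbm{k}^N$ solving $\{s_i = 0\}$, and nonvanishing of the two quantum dimensions provides the values $c_\ell = 1/q_\ell$, $c_r = 1/q_r$ completing it to a solution in $\mathbbm{k}^{N+2}$. Hence the decision procedure on $D_k$ returns ``solvable'', and since the algorithm tests $D_1, D_2, \ldots$ in turn it halts at or before stage $k$. One subtlety to address carefully: the enumeration of grading tuples is over all of $\mathbb{Q}^{2m}$, which is fine for a semi-decision procedure, but one should remark that for a \emph{fixed} target degree $d = |f|=|g|$ only finitely many tuples can possibly support a nonzero $d_Q$ with the right properties, so in practice the search at each rank is finite; this is not needed for the statement but clarifies why non-termination is the only failure mode. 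Assembling these pieces gives the proposition.
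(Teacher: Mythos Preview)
Your argument is correct and follows essentially the same route as the paper, which treats the discussion preceding the proposition as its proof (the proposition is stated with a \qed); you have simply made the two directions of the ``if and only if'' and the appeal to the weak Nullstellensatz explicit. One small slip to fix: since $d_Q^2 = (f-g)\cdot\Id_Q$, the forced degree of $d_Q$ is $\tfrac{1}{2}|f-g|$, not $|f-g|$ --- this does not affect your reasoning.
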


	\section{Computational feasibility}
	\label{feas}
	
	The algorithm described above consists of a discrete part and a continuous part:
	The discrete part is concerned with enumerating possible ranks and gradings, and the
	continuous part is concerned with solving geometric equations.
	
	Compared to the way it is described above, it is possible to significantly optimize
	the enumeration of possible gradings by taking into account the possible
	factorizations of the monomials appearing in $f$ and $g$. In fact, it is
	necessary to do so to avoid a combinatorial explosion. Details for such a
	significant optimization are provided in \cite{RW}.
	
	In this section we look at the feasibility of the continuous part. It is
	well known that Gr\"obner basis computations have a tendency
	to blow up; in fact, doubly-exponential runtime has been proved for pathological
	cases \cite{MM}. For this reason, algebraic problems such as the present one have attracted
	the interest of the cryptology community as a potentially quantum-computer
	resistant replacement for digital signatures now commonly implemented through
	a discrete logarithm problem \cite{Sh, MS}.
	
	To quantify computational difficulty and feasibility, this community maintains
	lists of open problems for the public at large to submit solutions. One of
	these challenges is the \emph{Fukuoka MQ Challenge} \cite{MQ}. One of their
	published lists consists of
	$2N$ quadratic equations in $N$ variables -- much like the ones we encountered
	in the previous section -- for ever increasing $N$.
	
	In the remainder of this section, we will compare the difficulty of the Gr\"obner
	basis computation corresponding to known matrix factorizations to the top contenders
	in the MQ Challenge as of July 2023. This should give an indication of the
	workability of this algorithm in practice.
	
	\begin{remark}
		In contrast to our present work, cryptology focuses on finite fields and the
		MQ Challenge is no exception. We believe that a comparison for feasibility
		still makes sense, as finite fields often have very efficient computer implementations.
		If anything, a problem stated over a field of characteristic zero will be \emph{less}
		feasible. If this belief holds true, the MQ Challenge offers a \emph{lower bound} for
		the difficulty of the problem we are trying to tackle.
		
		Another difference is that the MQ Challenge concerns itself with dense polynomials;
		i.e.\ with polynomials where almost all monomials of degree at most two have a
		nonzero coefficient. The polynomials that appear for us are less dense than that. In
		particular, no linear terms appear. We still believe that denseness is a reasonable
		comparison.
	\end{remark}
	
	To explain Table \ref{problem-size},
	let us go over one
	of its entries in detail. The three-variable potentials describing the singularities $Q_{10}$ and $E_{14}$ are known to be orbifold equivalent \cite{NRC}. Explicitly,
	they are given by $f_{E_{14}} = x^4 + y^3 + x z^2$ and $f_{Q_{10}} = u^4 w + v^3 + w^2$ respectively.
	
	The matrix factorization testifying that is given by
	\begin{eqnarray*}
		Q &=& T \oplus T(\frac{1}{4}) \oplus T(\frac{1}{3}) \oplus T(\frac{7}{12}) \\
		&& \oplus T \oplus T(\frac{1}{4}) \oplus T(\frac{1}{3}) \oplus T(\frac{7}{12})
	\end{eqnarray*}
	as a $\mathbb{Q}$-graded module over $T = \mathbbm{k}[x,y,z,u,v,w]$. That implies that
	$d_Q$'s entries have gradings given by the following matrix:
	\[
	\frac{1}{12}
	\left ( \begin{array}{cccccccc}
		& & & & 12 & 15 & 16 & 19 \\
		& & & &  9 & 12 & 13 & 16 \\
		& & & &  8 & 11 & 12 & 15 \\
		& & & &  5 &  8 &  9 & 12 \\
		12 & 15 & 16 & 19 & & & & \\
		9 & 12 & 13 & 16 & & & & \\
		8 & 11 & 12 & 15 & & & & \\
		5 &  8 &  9 & 12 & & & & \\
	\end{array}
	\right )
	\]
	Following the procedure from the last section, this results in the variables
	$c_1,\cdots,c_{106}$ to describe the most general version of $d_Q$ with
	these gradings.
	
	When taken coefficient-by-coefficient (both of the matrix and
	of the polynomial entries), the equation
	\[
	d_Q^2 = (f_{E_{14}}-f_{Q_{10}})\cdot\Id_Q
	\]
	gives $470$ equations in $c_1,\cdots,c_{106}$.
	Adding the quantum dimension helper variables and constraints, we are
	faced with a system of $472$ equations in $108$ variables.
	
	A significant optimization can be made. Since $d_Q$ is odd, it is of the form
	\[
	d_Q = \left ( \begin{array}{cc} 0 & d_Q^\sharp \\ d_Q^\flat & 0 \end{array} \right )
	\label{blocks}
	\]
	and $d_Q^2 = (f-g)\Id_Q$ reduces to the two equations
	\begin{eqnarray*}
		d_Q^\sharp d_Q^\flat &=& (f_{E_{14}}-f_{Q_{10}})\cdot\Id_{Q_+} \\
		d_Q^\flat d_Q^\sharp &=& (f_{E_{14}}-f_{Q_{10}})\cdot\Id_{Q_-} \\
	\end{eqnarray*}
	However, these two equations are equivalent to one another. We may therefore
	consider only the constraints arising from either one of them, and this
	cuts the number of independent constraints on $c_1,\cdots,c_{106}$ roughly in half.
	In the specific case above, we are left with $237$ equations in $108$
	variables. 
	
	For comparison, the current top contender in the MQ Challenge solved a system
	of $160$ equations in $80$ variables over the field of $2$ elements. This
	strongly suggests that the described algorithm would not have been able to find
	this orbifold equivalence within reasonable time.
	
	Table \ref{problem-size} lists similar outcomes for different equivalences. One is the example treated in the next Section $f_{Q_{18}} \sim f_{E_{30}}$, while the second involves an equivalence already known from existing ones, $f_{Q_{12}} \sim f_{E_{18}}$.
	
	\begin{table}
		\begin{center}
			\begin{tabular}{l|r|r}
				Equivalence & indeterminates & equations \\
				\hline
				$f_{Q_{10}} \sim f_{E_{14}}$ & 108 & 237 \\
				$f_{Q_{18}} \sim f_{E_{30}}$ & 140 & 341 \\
				$f_{Q_{12}} \sim f_{E_{18}}$ & 116 & 263 \\
			\end{tabular}
		\end{center}
		\caption{Gr\"obner basis challenge size for several known orbifold equivalences.}
		\label{problem-size}
	\end{table}
	
	\section{`Inspired guessing'}
	
	Given this rather sobering view on computer explorations, it is useful
	to combine them with some `inspired guessing': this can reduce the number of equations and indeterminates and in this way make the computer approach feasible.
	
	A way to detect natural candidates for orbifold equivalence is via the following result:
	\begin{lemma}
		Let $f \in \mathbbm{k} \left[ x_1,\ldots,x_n \right]$ be a potential with a $\mathbb{Q}$ grading assigned to each variable which we will denote as $\vert x_i \vert$. Define the \textit{central charge} associated to $f$ to be: $$c_f=\sum\limits_{i=1}^n \left( 1- \vert x_i \vert \right).$$ If two potentials $f,g$ are orbifold equivalent potentials, then they have the same central charge\footnote{One can relate this to the so-called strange duality of singularities as described by Arnold \cite{Ar,AGV} (see e.g.~\cite{NRC} for a detailed discussion in the case of unimodal singularities).}. 
	\end{lemma}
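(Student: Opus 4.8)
The plan is to extract the central-charge identity from the existence of a matrix factorization $Q$ of $f-g$ with nonzero left and right quantum dimensions. The starting observation is already recorded in the excerpt: if $Q$ exists, then $d_Q^2 = (f-g)\cdot\Id_Q$ forces $f$ and $g$ to be quasi-homogeneous of the same degree $d$. So I would first normalize the grading so that $f$ and $g$ both have degree $2$ (this is the standard convention for Landau-Ginzburg potentials, and it is where the ``$1 - |x_i|$'' in $c_f$ comes from; with $\deg f = 2$ the summand $1-|x_i|$ is exactly $\tfrac{\deg f}{2} - |x_i|$, one half of the ``charge deficit'' of $x_i$). Under this normalization the claim becomes: $\sum_i (1-|x_i|) = \sum_j (1-|y_j|)$.

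Next I would turn to the quantum dimension. The key structural fact is that $\qdim_f Q$ is, by Definition \ref{qdims}, a residue symbol whose numerator is $\str \partial_{x_1}Q\cdots\partial_{x_m}Q\,\partial_{y_1}Q\cdots\partial_{y_n}Q$ and whose denominator is $\partial_{x_1}f,\dots,\partial_{x_m}f$. I would track the $\mathbb{Q}$-grading of this residue symbol. A residue symbol $\res\big(\tfrac{g\,\mathrm dx_1\wedge\cdots\wedge\mathrm dx_m}{h_1,\dots,h_m}\big)$ is a scalar, hence has grading $0$, but it is only nonzero when $g$ lies in the appropriate graded component — concretely, by (F2) combined with the transformation rule (F3), a nonzero residue forces $|g| = \sum_i(|h_i| - |x_i|) \cdot(\text{something})$; more precisely, reducing to the monomial case $h_i = x_i^{d_i}$ via Lemma \ref{matrix-for-residue}, the residue picks out the $x_1^{d_1-1}\cdots x_m^{d_m-1}$-coefficient, which is nonzero only if $g$ has a monomial of grading $\sum_i (d_i - 1)|x_i| = \sum_i d_i|x_i| - \sum_i |x_i|$, and $\sum_i d_i |x_i| = \sum_i |{\partial_{x_i}f}| + \sum_i |x_i| \cdot 0$... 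I need to be careful: $|\partial_{x_i}f| = d - |x_i|$, and the relation $d_i |x_i| = |x_i^{d_i}|$ with $x_i^{d_i}\in I_f$ being a combination of the $\partial_{x_j}f$ tells me $d_i|x_i| \geq \min_j(d-|x_j| + (\text{nonneg}))$; the clean statement is that the socle of $\Jac f$ sits in degree $\sum_i(d-2|x_i|) \cdot(\dots)$. The cleanest route is: the residue pairing $\Jac f \times \Jac f \to \mathbbm{k}$ given by $(a,b)\mapsto \res(\tfrac{ab\,\mathrm dx}{\partial_{x_1}f,\dots})$ is nondegenerate (Grothendieck residue duality), and $\mathbbm{k}$ sits in degree $0$, so this pairing identifies the top-degree part of $\Jac f$ (the socle) as being in degree $\sum_i(d - 2|x_i|) = \sum_i(|\partial_{x_i}f| - |x_i|)$. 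Hence $\qdim_f Q \neq 0$ forces the numerator $\str\partial_{x_1}Q\cdots\partial_{y_n}Q$ to have a component of grading exactly $\sum_i(|\partial_{x_i}f| - |x_i|) = \sum_{i=1}^m (d - 2|x_i|)$.

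Then I would compute the grading of that same numerator a second way, from the $Q$-side. Since $\partial_{x_i}Q$ and $\partial_{y_j}Q$ are quasi-homogeneous maps of grading $|d_Q| - |x_i|$ and $|d_Q| - |y_j|$ respectively (differentiating the degree-$|d_Q|$ map $d_Q$ lowers degree by $|x_i|$), and the supertrace is grading-preserving, the numerator has grading $\sum_{i=1}^m(|d_Q| - |x_i|) + \sum_{j=1}^n(|d_Q| - |y_j|)$, i.e. $(m+n)|d_Q| - \sum_i|x_i| - \sum_j|y_j|$. But $|d_Q|$ is determined: $d_Q^2 = (f-g)\Id$ has grading $2|d_Q| = d$, so $|d_Q| = d/2$. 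Equating this with the socle-grading computed above gives one equation; doing the same for $\qdim_g Q$ (denominator $\partial_{y_1}g,\dots,\partial_{y_n}g$, socle of $\Jac g$ in degree $\sum_j(d-2|y_j|)$, same numerator) gives a second. Subtracting the two equations makes the common numerator-grading cancel and yields $\sum_i(d - 2|x_i|) = \sum_j(d - 2|y_j|)$, and then with the normalization $d=2$ this is exactly $\sum_i(1-|x_i|) = \sum_j(1-|y_j|)$, i.e. $c_f = c_g$.

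The main obstacle I anticipate is the second step: pinning down precisely \emph{which} graded component of $\Jac f$ supports a nonzero residue, i.e. identifying the socle degree of $\Jac f$ with $\sum_i(|\partial_{x_i}f| - |x_i|)$. This is the graded Grothendieck local duality statement for a quasi-homogeneous complete intersection; I would either cite it (it is classical — the residue pairing on the Milnor/Jacobian algebra of a quasi-homogeneous isolated singularity is a perfect pairing concentrated so that the socle is one-dimensional in the stated degree) or, to keep the argument self-contained with the tools of the excerpt, derive it from (F1)--(F3): use Lemma \ref{matrix-for-residue} to write $x_i^{d_i} = \sum_j M_{ij}\partial_{x_j}f$ with all entries quasi-homogeneous, note $\det M$ is then quasi-homogeneous of a forced degree, apply (F3) to reduce to the monomial denominator, and read off from (F2) that the surviving coefficient is the $\prod_i x_i^{d_i - 1}$ one — a short grading bookkeeping then gives the socle degree. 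A minor secondary point to handle carefully is that $\str\partial_{x_1}Q\cdots\partial_{y_n}Q$ need not be quasi-homogeneous as an element of $T=R\otimes S$, but it is a sum of quasi-homogeneous pieces, and only the piece of the correct grading contributes to each residue; since both residues are assumed nonzero, the relevant pieces are nonzero, which is all the argument needs.
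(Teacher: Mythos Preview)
The paper does not prove this lemma itself; it simply cites \cite[Proposition~6.4]{CR}. Your degree-counting approach via the residue formula for the quantum dimension is precisely the right idea and is essentially the argument in the cited reference, so in spirit you are on target.

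That said, the final ``equate and subtract'' step, as you have written it, has a gap. First, your ``secondary point'' misstates the situation: the numerator $h=\str\,\partial_{x_1}Q\cdots\partial_{y_n}Q$ \emph{is} quasi-homogeneous in $T=R\otimes_\mathbbm{k} S$, of total degree $(m+n)\tfrac{d}{2}-\sum_i|x_i|-\sum_j|y_j|=c_f+c_g$ (with $d=2$), because $d_Q$ is quasi-homogeneous and $\str$ preserves degree. What is \emph{not} fixed is its $x$-degree taken alone. Decomposing $h=\sum_k h_k$ by $x$-degree, the residue over the $x$-variables annihilates every $h_k$ with $k\neq 2c_f$ (the socle degree of $\Jac f$, which you identify correctly), so $\qdim_f Q$ lands in $S$ with $y$-degree $(c_f+c_g)-2c_f=c_g-c_f$. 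The equation ``$\deg_T h=2c_f$'' therefore does not follow from $\qdim_f Q\neq 0$ by itself; it would require knowing in advance that the residue lies in $\mathbbm{k}$ rather than merely in $S$.

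There are two clean repairs. Either (a) observe that a nonzero quasi-homogeneous element of a polynomial ring with strictly positive weights has nonnegative degree, so the two nonvanishing hypotheses give $c_g-c_f\ge 0$ and $c_f-c_g\ge 0$ simultaneously, hence $c_f=c_g$; this uses both quantum dimensions, as one should expect. Or (b) invoke the fact, recorded in the paper's Appendix~\ref{catback}, that in the graded setting the quantum dimensions take values in $\mathbbm{k}$; then $c_g-c_f=0$ follows already from $\qdim_f Q\neq 0$. Either way your outline goes through once this point is tightened.
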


	For a proof see \cite[Proposition 6.4]{CR}. This is a necessary yet not sufficient condition, but it is still a useful source of potential candidates for orbifold equivalences. Here we focus on some instance related to the so-called bimodal singularities (see e.g.~\cite{EP}), not necessarily new per se (since it can be derived from already known equivalences\footnote{Indeed, under a suitable change of variables $f_{Q_{18}}$ can be decomposed as a sum of $f_{D_9}$ and $f_{A_2}$, and $f_{E_{30}}$ as the sum of $f_{A_{15}}$ and $f_{A_2}$. The equivalence between $f_{D_9}$ and $f_{A_{15}}$ was proven in \cite{CRCR}.}) but not previously contained anywhere in the literature: $f_{Q_{18}} \sim f_{E_{30}}$.

	In the following we will describe in detail the procedure for this case, described by the potentials
	\begin{equation}
		\begin{split}
			f_{Q_{18}} &= x^8+y^3+x z^2\\ 
			f_{E_{30}} &= u^8 w+v^3+w^2
		\end{split}
		\nonumber
	\end{equation}
	(both with central charge $c_{Q_{18}}=\frac{17}{15}=c_{E_{30}}$).
	
	\begin{itemize}
		\item First we split the total grading $2$ into $3$ different pairs of two adequate summands (consistent with the degree assigned to each of the variables). Note that in this case, we have:
		\begin{equation*}
			\begin{split}
				\vert x \vert &=\frac{1}{4}, \\ 
				\vert y \vert &=\frac{2}{3}, \\
				\vert z \vert &=\frac{7}{8},
			\end{split}
			\quad \quad
			\begin{split}
				\vert u \vert &=\frac{1}{8} \\ 
				\vert v \vert &=\frac{2}{3} \\
				\vert w \vert &=1
			\end{split}
			\nonumber
		\end{equation*}
		Inspired by the charge of the entries at \cite{KST} for $Q_{12}$, we choose to split $2$ in the following way: $2=1+1=\frac{4}{3}+\frac{2}{3}=\frac{9}{8}+\frac{7}{8}$.
		\item Then we distribute these entries in a $2^3=8$ odd matrix (again inspired by the $V_0$ indecomposable for $f_{Q_{12}}$ of \cite{KST}) as in:
		\begin{equation}
			\frac{1}{24}
			\left( \begin{array}{cccccccc}
				&&&& 21 & 32 & 24 & 29 \\
				&&&& 16 & 27 & 19 & 24 \\
				&&&& 24 & 35 & 27 & 32 \\
				&&&& 13 & 24 & 16 & 21 \\
				27 & 32 & 24 & 35 &&&&  \\
				16 & 21 & 13 & 24 &&&&  \\
				24 & 29 & 21 & 32 &&&& \\
				19 & 24 & 16 & 27 &&&& \\
			\end{array}
			\right)
			\label{Q12E18}
		\end{equation}
		
		\item Here, notice that:
		\begin{itemize}
			\item The most general polynomial we can generate of charge $\frac{2}{3}$ is $c_1 u +c_2 v$, and of charge $\frac{4}{3}$ is $c_1 u^2 +c_2 u v+c_3 v^2$ ($c_i \in \mathbbm{C}$).
			\item With this grading, we cannot generate monomials of degree $\frac{13}{24}$ and $\frac{29}{24}$, and these entries will be straightforward zero. For the entries with degree $\frac{19}{24}$ and $\frac{35}{24}$, we set them by hand to be zero as part of the `inspired guess'.
			\item Monomials potentially generating $x^8$, $w^2$ and $x z^2$ could be $x^4$ and $w$ (both of charge $1$) and $z$ and $x z$ (each of charge $\frac{7}{8}$ and $\frac{9}{7}$) respectively.
		\end{itemize}

		Let us specify the non-zero blocks of the twisted differential as in Equation \ref{blocks}. We insert these entries in the matrix and adjust $\pm 1$ coefficients so the determinant of the $d_Q^\sharp$ is $v^3+w^2-x^8-y^3-x z^2$($=f_{E_{30}}-f_{Q_{18}}-u^8 w$) squared:
		\begin{equation}
			d_Q^1=\begin{pmatrix}
				z & v^2+y v+y^2 & x^4+w & 0 \\
				y-v & -xz & 0 & x^4+w \\
				x^4-w & 0 & -xz & -(v^2+y v+y^2) \\
				0 & x^4-w & v-y & z \\
			\end{pmatrix}
			\nonumber
		\end{equation}
		
		(again with $d_Q^0=\sqrt{Det[d_Q^1]} (d_Q^1)^{-1}$).
		\item At this point, we write for each entry in the matrix all possible remaining monomials making them the most general instance of a polynomial of each charge we can have. We get 84 variables.
		\item Then we impose $d_Q^2=\left(f_{E_{30}}-f_{Q_{18}}\right)\cdot\Id_Q$, and we reduce the amount of variables and equations to be satisfied solving by hand as many linear equations as possible (77 in total). We are then left with a system of 5 equations in 7 variables.
		\item And last we compute its left and right quantum dimensions. Imposing them to be non-zero we obtain two more inequalities to be satisfied.
	\end{itemize}

	\begin{remark}
		The reader may notice that this method of reducing the amount of equations and variables in steps is similar to what was called ``progressive perturbation'' in \cite{NRC}, where the shape of our starting ansatz is again suggested by the indecomposables of the triangulated categories of matrix factorizations in \cite{KST}.
	\end{remark}
	
	In this way we construct a matrix factorization with non-zero quantum dimensions. As described in Section \ref{search-algorithm}, a Gr\"obner basis computation now determines whether this system admits a solution. The size of the system is now sufficiently small to complete this in reasonable time, so we have proven that these two potentials are orbifold equivalent.

	\begin{remark}
		Observe that the potentials involved in this equivalence have the following nice property. Let us write $f_{Q_{18}}$ as $f_{Q_{18}}=\sum\limits_{i=1}^3 \prod\limits_{j=1}^3 x_j^{A_{ij}}$ with $A_{ij}=\left( \begin{matrix} 8 & 0 & 0\\ 0 & 3 & 0 \\ 1 & 0 & 2 \end{matrix} \right)$ the matrix of coefficients. It turns out that $f_{E_{30}}=\sum\limits_{i=1}^3 \prod\limits_{j=1}^3 x_j^{A_{ji}}$. This is what is called the `Berglund-H\"ubsch transposed potential', a well-known way to generate mirror symmetric Landau--Ginzburg potentials and we refer to the literature for further details on this \cite{BH,Ebe,Kra}.
	\end{remark}

	\newpage
	
	\appendix 
	\section{Categorical origins of orbifold equivalence}
	\label{catback}
	
	The concept of orbifold equivalence was first introduced in the context of the study of bicategories, and in particular that of Landau--Ginzburg models. Here, we aim to review the categorical origins of the definition of orbifold equivalence \cite{CR}. 
	
	First, consider the following categories of matrix factorizations:
	\begin{itemize}
		\item[$\mathrm{mf} \left( S,f \right)$:] given a potential $f \in S$, objects are matrix factorizations of $f$ as in Definition \ref{MFs}, and given two objects $\left( X,d_X \right)$, $\left( Y,d_Y \right)$ morphisms are $S$-linear maps $\varphi \colon X \to Y$. This category is differential supergraded, and for such a $\varphi$ there is a differential in the morphism space given by: $\delta \varphi=d_Y \circ \varphi-\left( -1 \right)^{|\varphi|} \varphi \circ d_X$, where $|\varphi|$ is the degree of $\varphi$.
	\end{itemize}
	
	We say that two morphisms $\varphi,\psi \colon M \to N$ are \textit{homotopy equivalent} if there exist a morphism $\theta$ of degree one such that $\varphi-\psi=d_N \circ \theta+\theta \circ d_M$. Homotopy equivalence is an equivalence relation.
	
	\begin{itemize}
		\item[$\mathrm{hmf} \left( S,f \right)$:] objects are those of $\mathbf{\mathrm{mf}} \left( S,f \right)$, and morphisms are those of $\mathbf{\mathrm{mf}} \left( S,f \right)$ that are even and compatible with the twisted differential (i.e.~satisfying that $d_Y \circ \varphi=\varphi \circ d_X$) modulo homotopy.
		\item[$\mathrm{hmf} \left( W \right)^\omega$:] idempotent completion of the category $\mathrm{hmf} \left( W \right)$. That means, we take objects isomorphic to direct summands of objects of $\mathrm{hmf} \left( W \right)$.
	\end{itemize}

	Next, let us define a tensor product of matrix factorizations. Let $f_1 \in S_1$, $f_2 \in S_2$, $f_3 \in S_3$ be three potentials, $X$ be a matrix factorization of $f_1-f_2$ and $Y$ be a matrix factorization of $f_2-f_3$. The tensor product matrix factorization $X \otimes_{S_2} Y$ is the matrix factorization of $f_1-f_3$ with base module over $S_1 \otimes_{\mathbbm{k}} S_3$ and twisted differential $d_{X \otimes Y}=d_X \otimes \Id_Y+\Id_X \otimes d_Y$. 
	\begin{remark}
		Notice here that for $S_2 \neq \mathbbm{k}$, $X \otimes_{S_2} Y$ is of infinite rank over $S_1 \otimes_{\mathbbm{k}} S_3$. Yet the resulting matrix factorization is actually isomorphic to one of finite rank \cite{Kho}.
	\end{remark}

	For the case $S_1=S_2=S$, note that under this tensor product,
	\begin{proposition}[\cite{CM,CR2}]
		$\mathrm{hmf} \left( S^{\otimes 2},f \otimes 1-1 \otimes f \right)^\omega$ is a tensor category.
	\end{proposition}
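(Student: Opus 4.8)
The plan is to realize $\mathrm{hmf}(S^{\otimes 2}, f \otimes 1 - 1 \otimes f)^\omega$ as the endomorphism category $\mathrm{End}_{\mathcal{LG}}(S,f)$ of the object $(S,f)$ in the bicategory $\mathcal{LG}$ of Landau-Ginzburg models, and to transport the monoidal structure from the horizontal composition of $\mathcal{LG}$. Concretely, $1$-morphisms $(S,f) \to (S,f)$ are the objects of $\mathrm{hmf}(S^{\otimes 2}, f\otimes 1 - 1\otimes f)^\omega$, $2$-morphisms are the even $d$-closed $S^{\otimes 2}$-linear maps modulo homotopy, and horizontal composition is the tensor product $X \otimes_S Y$ of matrix factorizations recalled above. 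Unwinding the claim, one must provide: (i) a $\mathbbm{k}$-bilinear additive bifunctor $\otimes_S$ landing back in $\mathrm{hmf}(S^{\otimes 2},\ldots)^\omega$; (ii) a unit object; (iii) associativity and unit constraints satisfying the pentagon and triangle axioms. (If ``tensor category'' is meant in the rigid sense, the duals are supplied by the biadjunctions of $\mathcal{LG}$ established in the cited works.)

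For (i): the underlying $S^{\otimes 2}$-module of $X\otimes_S Y$ carries the differential $d_X\otimes\Id+\Id\otimes d_Y$ with Koszul signs, and although it has infinite rank over $S^{\otimes 2}$, the cited reduction (Khovanov; see also \cite{CM}) shows it is isomorphic in $\mathrm{hmf}^\omega$ to a finite-rank matrix factorization. Fixing such a reduced model for each pair defines $\otimes_S$ on objects up to canonical isomorphism; on $2$-morphisms one sends $[\varphi],[\psi]$ to $[\varphi\otimes\psi]$, and checks this descends through homotopy and is bilinear and additive because $\otimes_S$ of modules is.

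For (ii) and (iii): take $I_f$ to be the Koszul matrix factorization of $f\otimes 1-1\otimes f$ built from a finite-difference decomposition $f\otimes 1-1\otimes f=\sum_i (x_i\otimes 1-1\otimes x_i)\,g_i$ with $g_i\in S^{\otimes 2}$ --- such $g_i$ exist because $f\otimes 1-1\otimes f$ lies in the kernel of the multiplication map $S^{\otimes 2}\to S$, which is generated by the $x_i\otimes 1-1\otimes x_i$ --- namely $I_f=\bigotimes_i\bigl(S^{\otimes 2}\xrightarrow{x_i\otimes 1-1\otimes x_i}S^{\otimes 2}\xrightarrow{g_i}S^{\otimes 2}\bigr)$. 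The essential lemma is that $I_f$ is a two-sided unit, i.e.\ there are natural isomorphisms $\lambda_X\colon I_f\otimes_S X\xrightarrow{\sim}X$ and $\rho_X\colon X\otimes_S I_f\xrightarrow{\sim}X$ in $\mathrm{hmf}^\omega$; this I would cite to \cite{CM} or reprove by observing that the Koszul complex underlying $I_f$ resolves the diagonal $S$-module, so that after the finite-rank reduction $I_f\otimes_S X$ splits off a copy of $X$ with contractible complement, for which one writes down explicit homotopies. The associator is easier: $(X\otimes_S Y)\otimes_S Z$ and $X\otimes_S(Y\otimes_S Z)$ have the same underlying module $X\otimes_S Y\otimes_S Z$ and the same differential $d_X\otimes\Id\otimes\Id\pm\Id\otimes d_Y\otimes\Id\pm\Id\otimes\Id\otimes d_Z$, so before reduction the associator is the identity and afterwards the canonical comparison of the chosen finite-rank models. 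Naturality and the pentagon and triangle identities then reduce to coherence of the module tensor product together with naturality of the reduction isomorphisms. Additive $\mathbbm{k}$-linearity is automatic: $\mathrm{hmf}^\omega$ is additive and $\mathbbm{k}$-linear (direct sums give biproducts, idempotent completion splits idempotents) and $\otimes_S$ preserves this.

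\textbf{Main obstacle.} The real content lies in (a) the finite-rank reduction --- making precise that $X\otimes_S Y$ is isomorphic in $\mathrm{hmf}^\omega$ to a finite-rank matrix factorization, and that these reductions can be chosen coherently enough to support well-defined associators and unitors --- and (b) the unit laws, i.e.\ verifying that the Koszul factorization $I_f$ really acts as a two-sided identity. Both are exactly the homological heart of the cited works \cite{Kho,CM}; once they are in place, the coherence axioms are routine bookkeeping.
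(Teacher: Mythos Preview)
Your outline is correct and matches the approach of the cited references \cite{CM,CR2}; note, however, that the paper itself gives no proof of this proposition at all---it simply states it with attribution and refers the reader to the literature for details and for the more general bicategorical statements. Your sketch (endomorphism category in $\mathcal{LG}$, horizontal composition via $\otimes_S$ with finite-rank reduction from \cite{Kho}, Koszul unit $I_f$, coherence from module-tensor coherence) is precisely the content of those references, and your identification of the finite-rank reduction and the unit laws as the substantive steps is accurate.
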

	
	In fact one can prove more general cases than just this one \cite{CM} and even compute dual matrix factorizations as well \cite{CM,CR3}, for which we refer to the literature. Hence a legitimate question is if this category is in addition pivotal. For future convenience, in order to answer this question let us go one step higher and define the following bicategory that we will denote as $\mathcal{LG}_{\mathbbm{k}}$:
	\begin{itemize}
		\item Objects are potentials as in Definition \ref{potential},
		\item For any two objects $\left( S_1,f_1 \right)$, $\left( S_2,f_2 \right)$, the morphism category is $\mathrm{hmf} \left( S_1 \otimes_{\mathbbm{k}} S_2,f_1-f_2 \right)^\omega$. \sloppy
	\end{itemize}
	
	This is indeed a bicategory \cite{CR2}. Furthermore,
	\begin{thm}
		$\mathcal{LG}_{\mathbbm{k}}$ is a graded pivotal bicategory. 
	\end{thm}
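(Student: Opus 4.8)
The plan is to establish that $\mathcal{LG}_{\mathbbm{k}}$ carries the structure of a graded pivotal bicategory by assembling the adjunction and duality data on hom-categories and verifying the coherence axioms. First I would unpack what needs proving: a pivotal structure on a bicategory consists of a choice, for every $1$-morphism $X \colon (S_1,f_1) \to (S_2,f_2)$ (i.e.\ every object of $\mathrm{hmf}(S_1\otimes_\mathbbm{k} S_2, f_1-f_2)^\omega$), of a left adjoint $X^\dagger$ and a right adjoint ${}^\dagger X$ together with the evaluation/coevaluation $2$-morphisms, subject to the usual zig-zag (triangle) identities, plus the pivotal condition that the canonical natural isomorphism $X \cong {}^{\dagger\dagger}X$ (or equivalently a trivialisation of the double-dual) is monoidal — compatible with horizontal composition and units. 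The ``graded'' qualifier means all of this data must be realised by quasi-homogeneous maps of the appropriate degrees, so each step carries a bookkeeping obligation on $\mathbb{Q}$-gradings.

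The key steps, in order, are as follows. (1) Recall from the cited literature (\cite{CM,CR2,CR3}) the explicit construction of the dual matrix factorization: for $X$ a matrix factorization of $f_1-f_2$ over $S_1\otimes_\mathbbm{k} S_2$, the dual $X^\dagger$ is built from the $\mathbbm{k}$-linear (or $S$-linear) dual of the underlying supermodule with a twisted differential induced by $d_X$, viewed now as a matrix factorization of $f_2-f_1$; check that this lands in $\mathrm{hmf}(\cdots)^\omega$ (using that idempotent completion is stable under duals) and that the grading shift is the expected one. (2) Write down the evaluation and coevaluation morphisms $\mathrm{ev}_X, \mathrm{coev}_X$ explicitly as even, degree-zero morphisms in the relevant hom-categories — here the tensor product of matrix factorizations from the excerpt (with $d_{X\otimes Y} = d_X\otimes\Id + \Id\otimes d_Y$) and the finiteness result \cite{Kho,Kho2} are needed so that $X\otimes_{S_2} X^\dagger$ is genuinely a finite-rank object representing the correct composite. (3) Verify the two zig-zag identities up to homotopy — since we are in $\mathrm{hmf}$, equalities only need to hold modulo the homotopy relation, which is what makes this tractable. (4) Identify ${}^{\dagger\dagger}X$ with $X$: exhibit the canonical $2$-isomorphism and check it is given by a degree-zero morphism. (5) Verify the pivotal coherence: that this family of isomorphisms is compatible with horizontal composition $X\otimes Y$ and with the identity $1$-morphisms (the identity defects), which amounts to a diagram chase using the explicit formulas from steps (1)–(2). (6) Track gradings throughout, confirming that the shift conventions $R(n)$ make every structure map homogeneous, so the bicategory is \emph{graded} pivotal.

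I expect the main obstacle to be step (3) combined with the finite-rank reduction in step (2): the tensor product $X\otimes_{S_2} X^\dagger$ is a priori infinite-rank over $S_1\otimes_\mathbbm{k} S_3$, and to even write down $\mathrm{ev}$ and $\mathrm{coev}$ and check the zig-zags one must work with the explicit homotopy equivalence to a finite-rank model (the ``perturbation lemma'' style argument of \cite{Kho,CM}). The zig-zag identities then hold only up to an explicit homotopy, and producing that homotopy — or rather citing it cleanly from the literature and checking it is grading-compatible — is the technical heart. The pivotal compatibility (step 5) is conceptually the subtle point but is essentially the content of the cited results; most of the genuine work in \emph{this} paper's self-contained account would be the grading bookkeeping, i.e.\ checking that none of the structure morphisms secretly have nonzero $\mathbb{Q}$-degree, which follows from the observation (already noted after the definition of matrix factorization of $f-g$) that $d_Q$ and hence all derived data are quasi-homogeneous. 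Given that essentially all the heavy lifting is done in \cite{CM,CR,CR2,CR3}, the honest statement of the proof is a citation accompanied by the remark that the grading refinement is routine; I would present it that way rather than reproducing the bicategorical coherence verification in full.
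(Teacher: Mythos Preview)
The paper does not actually prove this theorem: immediately after stating it, the text explains what graded pivotality means and then says ``For details we refer to the original source \cite{CM}.'' So your closing instinct --- that the honest proof is a citation to \cite{CM} --- matches exactly what the paper does.

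However, your reading of the word ``graded'' is backwards, and this matters. You interpret ``graded pivotal'' as ``pivotal, with all structure $2$-morphisms of $\mathbb{Q}$-degree zero,'' and you set yourself the task of ``checking that none of the structure morphisms secretly have nonzero $\mathbb{Q}$-degree.'' The paper says the opposite: ``Graded pivotality means that the bicategory is pivotal \emph{up to shifts}, and one needs a detailed discussion of how these and adjunction maps are compatible.'' In other words, the adjunction/pivotal structure maps in $\mathcal{LG}_\mathbbm{k}$ are in general \emph{not} degree-zero; grading shifts genuinely intervene, and ``graded pivotal'' is a weakening of ``pivotal'' that accommodates this. The very next remark in the paper confirms the point: only the subbicategory $\mathcal{LG}'_\mathbbm{k}$ of potentials in an \emph{even} number of variables is honestly pivotal. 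For an odd number of variables the shifts do not cancel, so your step~(4) (identifying ${}^{\dagger\dagger}X$ with $X$ by a degree-zero $2$-isomorphism) and your step~(6) (verifying all structure maps have degree zero) would fail as stated. The outline of steps (1)--(3) and (5) is otherwise the right shape for what \cite{CM} does, but the bookkeeping you describe is not a routine check that shifts vanish --- it is the substance of formulating compatibility conditions in the presence of nontrivial shifts.
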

	
	Graded pivotality means that the bicategory is pivotal up to shifts, and one needs a detailed discussion of how these and adjunction maps are compatible. For details we refer to the original source \cite{CM}. But, notice here that:
	\begin{remark}
		The subbicategory $\mathcal{LG'}_{\mathbbm{k}}$ whose objects are potentials with an even number of variables is pivotal.
	\end{remark}
	
	Moreover, we have explicit formulas for the adjunctions and more precisely of the evaluation and coevaluation maps. These were constructed in the one-variable case in \cite{CR3} and then for more general cases in \cite{CM}. One may combine these for example to get the explicit expressions of the so-called \textit{left and right quantum dimensions} as stated in Definition \ref{qdims}.

	Using the theory of equivariant and orbifold completion of bicategories \cite{CR2}, one finds the following result for $\mathcal{LG'}_{\mathbbm{k}}$:
	\begin{thm}
		Let $f_1=f_1 \left( x_1,\ldots,x_m \right)$, $f_2=f_2 \left( y_1,\ldots,y_n \right)$ be two potentials and $\left( M,d_M \right) \in \mathrm{hmf} \left( f_2-f_1 \right)$ with invertible quantum dimension. Then $$\left(M,d_M \right) \colon \left( \left( S_1,f_1 \right), M^\dagger \otimes M \right) \rightleftarrows\left( \left( S_2,f_2 \right), I_{f_2 \otimes 1-1 \otimes f_2} \right) \colon \left( M,d_M \right)^\dagger$$ (where $\left( M,d^M \right)^\dagger$ is the right adjoint of $\left( M,d^M \right)$) is an adjoint equivalence in $\mathcal{LG'}_{\mathbbm{k}}$, and $M^\dagger \otimes M$ is a symmetric separable Frobenius algebra object in $\mathrm{hmf} \left( S_1 \otimes_{\mathbbm{k}} S_1,f_1 \otimes 1-1 \otimes f_1 \right)^\omega$. \sloppy
		\label{adjunctioneq}
	\end{thm}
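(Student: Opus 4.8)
I would treat this theorem as an instance of the general machinery of orbifold completion of bicategories developed in \cite{CR2}, specialized to the pivotal bicategory $\mathcal{LG'}_{\mathbbm{k}}$, and then verify the concrete Landau--Ginzburg-specific input by hand. The overall strategy has three layers: (i) recall what the orbifold completion construction requires as input and what it outputs; (ii) produce that input from the given matrix factorization $(M,d_M)$ with invertible quantum dimension; (iii) unwind the output into the explicit adjoint-equivalence statement claimed.

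\emph{Step 1: the Frobenius algebra object.} First I would set $A := M^\dagger \otimes M$, an object of the tensor category $\mathrm{hmf}(S_1\otimes_{\mathbbm{k}} S_1, f_1\otimes 1 - 1\otimes f_1)^\omega$ (it lies in this category because $M^\dagger \in \mathrm{hmf}(f_1 - f_2)$ and $M \in \mathrm{hmf}(f_2 - f_1)$, so the tensor product over $S_2$ is a matrix factorization of $f_1 - f_1$, finite-rank by \cite{Kho}). The multiplication $A\otimes A \to A$ is built from the evaluation map $M\otimes M^\dagger \to I_{f_2}$ of the pivotal structure on $\mathcal{LG'}_{\mathbbm{k}}$, and the comultiplication from the coevaluation; the unit and counit come from the other two adjunction maps. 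One then checks the Frobenius axioms, which hold formally in any pivotal bicategory for an object of the form $N^\dagger \otimes N$. \emph{Symmetry} is where pivotality (not just the existence of adjoints) is used: the two ways of building the ``Frobenius form'' agree because the left and right duals are coherently identified. \emph{Separability} is precisely the point where invertibility of the quantum dimension enters: the composite $A \to A\otimes A \to A$ (comultiplication followed by multiplication) equals multiplication by the quantum dimension of $M$ up to the appropriate normalization, so it is invertible iff $\qdim$ is invertible, and rescaling the comultiplication by its inverse makes $A$ separable. This is the heart of the argument and I expect it to be the main obstacle — getting the normalizations, signs (suppressed here per the remark after Definition \ref{qdims} but present in \cite{CM,CRCR}), and grading shifts exactly right so that the separability idempotent splits off the unit.

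\emph{Step 2: apply orbifold completion.} Once $A$ is a symmetric separable Frobenius algebra in the morphism category over $(S_1,f_1)$, the general theory of \cite{CR2} says that $((S_1,f_1), A)$ is a legitimate object of the orbifold completion $(\mathcal{LG'}_{\mathbbm{k}})_{\mathrm{orb}}$, and that the trivial algebra $I_{f_2\otimes 1 - 1\otimes f_2}$ on $(S_2,f_2)$ is too. The bimodule $(M,d_M)$, viewed with its natural left $I_{f_2}$-action and right $A$-action, becomes a 1-morphism between these two orbifold objects, and $(M,d_M)^\dagger$ its adjoint. That $(M,d_M)$ and $(M,d_M)^\dagger$ are mutually inverse up to 2-isomorphism — i.e.\ that they form an adjoint equivalence — follows because in $\mathcal{LG'}_{\mathbbm{k}}$ itself the adjunction unit $I \to M^\dagger\otimes M = A$ and counit $M\otimes M^\dagger \to I_{f_2}$ exhibit $M$ as an equivalence ``relative to $A$'': tensoring $M$ with $M^\dagger$ over $I_{f_2}$ gives $A$ on the nose, and tensoring $M^\dagger$ with $M$ over $A$ collapses (using separability of $A$) to $I_{f_2}$. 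The invertibility of $\qdim$ is what guarantees these units and counits are isomorphisms in the idempotent-completed homotopy category rather than merely split maps.

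\emph{Step 3: assemble.} Finally I would record that the data produced — the algebra $A$, the 1-morphisms $(M,d_M)$ and $(M,d_M)^\dagger$, and the unit/counit 2-morphisms — satisfy the triangle identities, which again is automatic from pivotality in $\mathcal{LG'}_{\mathbbm{k}}$ together with the separability rescaling, yielding the adjoint equivalence as stated. Throughout, the restriction to potentials in an even number of variables is needed so that we are working in the honestly pivotal subbicategory $\mathcal{LG'}_{\mathbbm{k}}$ rather than the merely graded-pivotal $\mathcal{LG}_{\mathbbm{k}}$, avoiding having to track the grading shifts in the coherence isomorphisms. The routine verifications (Frobenius axioms, triangle identities, bimodule compatibilities) I would cite from \cite{CM} and \cite{CR2}; the one genuinely LG-specific computation is the identification of the separability endomorphism with multiplication by $\qdim M$, which I would carry out using the explicit residue formula of Definition \ref{qdims} and the evaluation/coevaluation formulas of \cite{CR3,CM}.
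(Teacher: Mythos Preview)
The paper does not give its own proof of this theorem: it is stated as a consequence of the general theory of equivariant and orbifold completion of bicategories, with the work deferred to the cited Carqueville--Runkel papers (\cite{CR}, \cite{CM}). Your proposal correctly identifies this and sketches precisely the argument one finds there --- building $A=M^\dagger\otimes M$ as a symmetric Frobenius algebra from pivotality, extracting separability from invertibility of the quantum dimension, and then reading off the adjoint equivalence in the orbifold completion --- so your approach coincides with the one the paper invokes.

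One small bibliographic caution: the orbifold completion machinery is in \cite{CR} (``Orbifold completion of defect bicategories''), while \cite{CR2} is the paper on the monoidal structure of matrix bi-factorisations; the paper's own citation just before the theorem is slightly ambiguous on this point, and your write-up inherits that ambiguity. If you want to be precise about where each ingredient lives, the Frobenius-from-pivotality and separability-from-$\qdim$ statements are the general bicategorical ones in \cite{CR}, and the explicit Landau--Ginzburg adjunction maps and residue identification you mention in Step~3 are in \cite{CM} and \cite{CR3}.
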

	
	Let's reformulate this theorem as an equivalence relation:

	\begin{definition}
		Let $f_1=f_1 \left( x_1,\ldots,x_m \right)$, $f_2=f_2 \left( y_1,\ldots,y_n \right)$ be two potentials and $\left( M,d^M \right) \in \mathrm{Ob} \left( \mathrm{hmf} \left( f_2-f_1 \right) \right)^\omega$. Assign to $\left( M,d_M \right)$ two elements in $\mathbbm{k}$, the left and right quantum dimensions $\qdim_l \left(M\right)$ $\qdim_r\left(M\right)$ as in Definition \ref{qdims}. If there exists such an $\left( M,d_M \right)$, then we say that $V$ and $W$ are \textit{orbifold equivalent}. 
		\label{orbeqdefn}
	\end{definition}
	
	\begin{remark}
		Notice that this definition is equivalent to Definition \ref{defnorbeq}.
	\end{remark}

	\begin{proposition}[\cite{CRCR}]
		Denote as $\mathcal{P}_{\mathbbm{k}}$ the set of potentials with any number of variables with coefficients in the field $\mathbbm{k}$. Orbifold equivalence is an equivalence relation in $\mathcal{P}_{\mathbbm{k}}$.
	\end{proposition}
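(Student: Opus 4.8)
To prove that orbifold equivalence is an equivalence relation I would verify reflexivity, symmetry and transitivity separately, using the residue calculus of Section 2 (facts (F1)--(F3) and Proposition \ref{res-can-be-computed}), the explicit adjunction and duality formulas available in $\mathcal{LG}_\mathbbm{k}$ \cite{CR3,CM}, and the tensor product of matrix factorizations together with its finiteness. Reflexivity and symmetry are fairly concrete; transitivity is the substantial step, and it is where the machinery alluded to after Definition \ref{defnorbeq} enters.

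For \textbf{reflexivity} I would exhibit the Koszul ``identity defect'' $\Delta_f$: the matrix factorization of $f\otimes 1 - 1\otimes f$ over $R\otimes_\mathbbm{k} R$ built from the sequences $(1\otimes x_i - x_i\otimes 1)_i$ and $(f^{[i]})_i$, where the difference quotients $f^{[i]}\in R\otimes_\mathbbm{k} R$ are chosen so that $\sum_i(1\otimes x_i - x_i\otimes 1)\,f^{[i]} = 1\otimes f - f\otimes 1$. It carries a quasi-homogeneous grading because $f$ does, so it is a matrix factorization in the sense of Definition \ref{MFs}. Its left and right quantum dimensions are then computed straight from Definition \ref{qdims}: after the transformation rule (F3) one is reduced by (F2) to reading off a single monomial coefficient, and the result is $\pm 1$, in particular nonzero. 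Hence $f$ is orbifold equivalent to itself.

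For \textbf{symmetry}, given a matrix factorization $Q$ of $f - g$ with nonzero left and right quantum dimensions, I would take its right adjoint $Q^\dagger$ in $\mathcal{LG}_\mathbbm{k}$ (for which explicit formulas are available from \cite{CR3,CM}); it is a matrix factorization of $g - f$. The pivotal structure interchanges the left and right trace under passage to the adjoint, so that $\qdim_l(Q^\dagger)$ and $\qdim_r(Q^\dagger)$ coincide with $\qdim_r(Q)$ and $\qdim_l(Q)$ up to a sign and a grading shift; in particular both are nonzero. Hence $g$ is orbifold equivalent to $f$.

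For \textbf{transitivity}, suppose $Q$ is a matrix factorization of $f - g$ over $R\otimes_\mathbbm{k} S$ and $Q'$ one of $g - h$ over $S\otimes_\mathbbm{k} U$, both with nonzero quantum dimensions. Form the tensor product $Q\otimes_S Q'$ with twisted differential $d_Q\otimes\Id + \Id\otimes d_{Q'}$; this is a matrix factorization of $f - h$ over $R\otimes_\mathbbm{k} U$, a priori of infinite rank, but by \cite{Kho} it is isomorphic in $\mathrm{hmf}(R\otimes_\mathbbm{k} U, f-h)^\omega$ to one of finite rank. One then uses that the quantum dimension is multiplicative under this horizontal composition -- $\qdim_l(Q\otimes_S Q') = \qdim_l(Q)\cdot\qdim_l(Q')$ up to sign, and likewise on the right -- which forces both to be nonzero, so $f$ is orbifold equivalent to $h$. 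The \textbf{main obstacle} is exactly this last step: the tensor product is genuinely infinite-dimensional, so one must invoke the finiteness result of \cite{Kho} and then argue that the resulting finite-rank representative still computes the product of quantum dimensions -- equivalently, that $\qdim_l$ and $\qdim_r$ descend to well-defined invariants on the idempotent-completed homotopy category and are compatible with horizontal composition. This compatibility is where the (graded) pivotal structure of $\mathcal{LG}_\mathbbm{k}$ is needed, and since $\mathcal{P}_\mathbbm{k}$ contains potentials in any number of variables one must carry the grading shifts -- and hence the signs above -- through the argument. This is the content of Section 2.1 of \cite{CRCR}.
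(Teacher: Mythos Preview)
Your sketch is correct and matches what the paper does, though strictly speaking the paper gives no proof of its own: both occurrences of this statement (the theorem in Section~2 and the proposition in Appendix~\ref{catback}) are handled by citing \cite{CRCR}, preceded only by the informal remark that reflexivity already needs a nontrivial $Q$ and that transitivity requires reducing an infinite-rank tensor product to a finite one. Your outline --- the Koszul identity defect $\Delta_f$ for reflexivity, the adjoint $Q^\dagger$ for symmetry, and the tensor product together with \cite{Kho} and multiplicativity of quantum dimensions for transitivity --- is exactly the argument of Section~2.1 of \cite{CRCR} that the paper defers to, and you have correctly identified the genuine obstacle.
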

	
	Notice here that:
	\begin{itemize}
		\item[-] Following the notation in Definition \ref{orbeqdefn}, if $f_1$ and $f_2$ are orbifold equivalent then clearly $m=n$ mod 2.
		\item[-] We are considering implicitly a $\mathbb{Q}$-graded setting, and so the quantum dimensions take values in $\mathbbm{k}$. This can be seen from counting degrees in the formulas of Definition 2.18.
		\item[-] Quantum dimensions are independent of the $\mathbb{Q}$-grading of a matrix factorization.
	\end{itemize}
	
	Given two potentials $f_1$, $f_2$ and a matrix factorization $X$ of $f_1-f_2$ proving that $f_1$ and $f_2$ are orbifold quivalent, one finds as a corollary of Theorem \ref{adjunctioneq} that the following equivalence of categories holds:
	
	\begin{proposition}
		\begin{equation}
			\mathrm{hmf} \left( S,f_2 \right)^\omega \simeq \mathrm{mod} \left( X^\dagger \otimes X \right)
			\nonumber
		\end{equation}
		\label{cororbeq}
	\end{proposition}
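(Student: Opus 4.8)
The plan is to deduce this as a formal consequence of Theorem~\ref{adjunctioneq}, after passing to the orbifold completion $(\mathcal{LG'}_{\mathbbm{k}})_{\mathrm{orb}}$ of the bicategory $\mathcal{LG'}_{\mathbbm{k}}$ --- the equivariant/orbifold completed bicategory of \cite{CR2}, whose objects are triples $((S,f),A)$ with $A$ a symmetric separable Frobenius algebra object in $\mathrm{hmf}(S^{\otimes 2},f\otimes 1-1\otimes f)^\omega$, and whose $1$-morphism categories are categories of bimodules. First I would set $A:=X^\dagger\otimes X$ and record what Theorem~\ref{adjunctioneq} provides. Since $X$ is a matrix factorization of $f_1-f_2$ with invertible quantum dimensions, the standard sign-flip identification $\mathrm{hmf}(S,-f)\cong\mathrm{hmf}(S,f)$ lets us regard $X$ as an object of $\mathrm{hmf}(f_2-f_1)$ and apply the theorem: up to the usual sign and grading-shift identifications it tells us that $A$ is a symmetric separable Frobenius algebra object in the endomorphism category of $(S_1,f_1)$, and that $X$ together with $X^\dagger$ is an adjoint equivalence in $(\mathcal{LG'}_{\mathbbm{k}})_{\mathrm{orb}}$ between $((S_1,f_1),A)$ and $((S_2,f_2),I_{f_2})$, where $I_{f_2}$ is the identity $1$-morphism of $(S_2,f_2)$ regarded as the trivial Frobenius algebra.

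Next I would invoke the elementary bicategorical fact that an adjoint equivalence $F\colon a\xrightarrow{\sim}b$ induces, for every object $c$, an equivalence of Hom-categories $\mathcal{B}(c,a)\xrightarrow{F\circ-}\mathcal{B}(c,b)$. Applying this in $\mathcal{B}=(\mathcal{LG'}_{\mathbbm{k}})_{\mathrm{orb}}$ with $F=X$, $a=((S_1,f_1),A)$, $b=((S_2,f_2),I_{f_2})$ and with $c$ the trivial potential $((\mathbbm{k},0),I_0)$ in zero variables, one obtains an equivalence between the morphism categories from $((\mathbbm{k},0),I_0)$ to $((S_1,f_1),A)$ and from $((\mathbbm{k},0),I_0)$ to $((S_2,f_2),I_{f_2})$, both computed in $(\mathcal{LG'}_{\mathbbm{k}})_{\mathrm{orb}}$. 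It then remains to identify these two Hom-categories from the construction of the orbifold completion in \cite{CR2,CRCR}. The morphism category into the object carrying the \emph{trivial} algebra, $((S_2,f_2),I_{f_2})$, is just the ordinary morphism category $\Hom_{\mathcal{LG'}_{\mathbbm{k}}}((\mathbbm{k},0),(S_2,f_2))=\mathrm{hmf}(S_2,-f_2)^\omega\cong\mathrm{hmf}(S,f_2)^\omega$; the morphism category into $((S_1,f_1),A)$ is, by definition of the orbifold completion, the category of $A$-modules inside $\mathrm{hmf}(S_1,-f_1)^\omega\cong\mathrm{hmf}(S_1,f_1)^\omega$, which is precisely $\mathrm{mod}(X^\dagger\otimes X)$. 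Substituting these identifications yields $\mathrm{hmf}(S,f_2)^\omega\simeq\mathrm{mod}(X^\dagger\otimes X)$. If one also wants the explicit equivalence functor, unwinding the adjoint equivalence shows it is the induction functor $N\mapsto N\otimes_A X$ (equivalently $N\mapsto\Hom_A(X^\dagger,N)$), but this is not needed for the bare statement.

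I expect two places to demand care. The substantive step --- the one that gives the symbol $\mathrm{mod}$ in the statement its meaning --- is the identification of the Hom-categories of the orbifold completion with module categories (and of its horizontal composition with the relative tensor product of bimodules); this is available in \cite{CR2,CRCR}, but it has to be quoted with the right variances, so that the equivalence lands on $\mathrm{mod}(X^\dagger\otimes X)$ rather than on a module category for $X\otimes X^\dagger$ or for a module over the opposite algebra. The fiddly-but-routine part is the bookkeeping of signs, shifts and parities: tracking which of $f_1,f_2$ ends up carrying the algebra $A$ and which ends up on the honest-$\mathrm{hmf}$ side, the identifications $\mathrm{hmf}(S,-f)\cong\mathrm{hmf}(S,f)$ used above, and the circumstance that for potentials in an odd number of variables one is really working in the merely \emph{graded}-pivotal $\mathcal{LG}_{\mathbbm{k}}$, so that the whole argument is to be understood up to $\mathbb{Q}$-grading shifts --- which are invisible both to the module-category comparison and to the quantum dimensions (cf.\ the remarks following Theorem~\ref{adjunctioneq} and \cite{CM}).
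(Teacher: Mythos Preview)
Your proposal is correct and matches the paper's approach: the paper gives no proof at all, simply recording the statement as ``a corollary of Theorem~\ref{adjunctioneq}'' obtained through the orbifold-completion machinery of \cite{CR,CRCR}. Your argument is precisely the standard unpacking of that corollary --- passing to $(\mathcal{LG'}_{\mathbbm{k}})_{\mathrm{orb}}$, using the adjoint equivalence to compare Hom-categories out of the trivial potential, and identifying those Hom-categories as $\mathrm{hmf}(S,f_2)^\omega$ and $\mathrm{mod}(X^\dagger\otimes X)$ respectively --- so you are supplying the details the paper omits rather than taking a different route. (One bibliographic quibble: the orbifold-completion construction you invoke is in \cite{CR}, not \cite{CR2}; the paper's own cross-reference in the appendix is slightly inconsistent on this point.)
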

	
	In the Introduction it was mentioned that orbifold equivalence could be used as a source of equivalences of categories in the context of the Landau--Ginzburg/conformal field theory correspondence, and Proposition \ref{cororbeq} is the key to do it. In the case of simple singularities \cite{CRCR}, we found equivalences of categories of matrix factorizations of these potentials and the expected from CFT categories of modules over separable symmetric Frobenius algebra objects \cite{Ost}. For more details we refer to \cite{DRCR,AnaContMath}. For the remaining existing orbifold equivalences we hope to find similar equivalences and their respective CFT counterpart (hopefully not so distant) in the future \cite{RC}.

	\newpage
	
	\newcommand\arxiv[2]      {\href{http://arXiv.org/abs/#1}{#2}}
	\newcommand\doi[2]        {\href{http://dx.doi.org/#1}{#2}}
	\newcommand\httpurl[2]    {\href{http://#1}{#2}}
	

\begin{thebibliography}{ZZZ}
		
		\bibitem{Ar} V. I. Arnol'd, \textit{Critical points of smooth functions, and their normal forms}, Russ. Math. Surv. \textbf{30} 5 (1975), 1--75.
		
		\bibitem{AGV} V. I. Arnol'd, S. M. Gusein-Zade and A. N. Varchenko, \textit{Singularities of differentiable maps, Volume I. The classification of critical points, caustics and wave fronts}, translated from Russian by Ian Porteous and Mark Reynolds. Monographs in Mathematics \textbf{82}, Birkh\"auser Boston, Inc. (Boston, MA), 1985.
		
		\bibitem{BH} P. Berglund and T. H{\"u}bsch, \textit{A generalized construction of mirror manifolds}, Nucl. Phys. B \textbf{393} (1993), 377-391.
		
		\bibitem{BR} I. Brunner and D. Roggenkamp, \textit{B-type defects in Landau--Ginzburg models}, JHEP \textbf{0708} (2007), 093.
		
		\bibitem{CM} N. Carqueville and D. Murfet, \textit{Adjunctions and defects in Landau--Ginzburg models}, Adv. Math. \textbf{289} (2016), 480--566.
		
		
		\bibitem{CR} N. Carqueville and I. Runkel, \textit{Orbifold completion of defect bicategories}, Q. Topol. \textbf{7} (2016), 203--279.
		
		\bibitem{CR2} N. Carqueville and I. Runkel, \textit{On the monoidal structure of matrix bi-factorisations}, J. Phys. A \textbf{43} (2010), 275401. \sloppy
		
		\bibitem{CR3} N. Carqueville and I. Runkel, \textit{Rigidity and defect actions in Landau--Ginzburg models}, Comm. Math. Phys. \textbf{310} (2012), 135--179.
		
		\bibitem{CRCR} N. Carqueville, A. Ros Camacho and I. Runkel, \textit{Orbifold equivalent potentials}, JPAA \textbf{220} 2 (2016), 759--781.
		
		\bibitem{CdZ} S. Cecotti and M. Del Zotto, \textit{On Arnold's 14 `exceptional' $N=2$ superconformal
			gauge theories}, JHEP \textbf{1110} (2011), 099.
		
		\bibitem{DKR} A. Davydov, L. Kong and I. Runkel, \textit{Field theories with defects and the centre functor}, \href{https://bookstore.ams.org/pspum-83/}{Mathematical Foundations of Quantum Field and Perturbative String Theory}, Proceedings of Symposia in Pure Mathematics \textbf{83} (2011), 354.
		
		\bibitem{DRCR} A. Davydov, A. Ros Camacho and I. Runkel, \textit{$N=2$ minimal conformal field theories and matrix bifactorisations $x^d$}, Comm. Math. Phys. \textbf{357} (2018), 597--629.
		
		\bibitem{Ebe} W. Ebeling, \textit{Homological mirror symmetry for singularities}, arXiv:1601.06027 [math.AG].
		
		\bibitem{EP} W. Ebeling and D. Ploog, \textit{A geometric construction of Coxeter-Dynkin diagrams of bimodal singularities}, Manuscripta Math. \textbf{140} (2013), 195--212.
		
		
		\bibitem{FJR} H. Fan, T. Jarvis and Y. Ruan, \textit{The Witten equation, mirror symmetry and quantum singularity theory}, Ann. Math. \textbf{178} (2013), 1.
		
		\bibitem{FFRS} J. Fr{\"o}hlich, J. Fuchs, I. Runkel and C. Schweigert, \textit{Defect lines, dualities, and generalised orbifolds}, \href{https://www.worldscientific.com/doi/abs/10.1142/9789814304634_0056}{Proceedings of the XVIth International Congress on Mathematical Physics} (2010), 608--613.
		
		\bibitem{Ion} A. Ionov, \textit{McKay correspondence and orbifold equivalence}, \href{https://arxiv.org/abs/2202.12135}{arXiv:2202.12135 [math.AG]}.
		
		\bibitem{KST}  H. Kajiura, K. Saito and A. Takahashi, \textit{Triangulated categories of matrix factorizations for regular systems of weights of $\varepsilon=-1$}, Adv. Math. \textbf{220} 5 (2009), 1602--1654.
		
		\bibitem{Kho} M. Khovanov and L. Rozansky, \textit{Matrix factorizations and link homology}, Fund. Math. \textbf{199} (2008), 1--91.
		
		\bibitem{Kra} M. Krawitz, \textit{FJRW rings and Landau--Ginzburg mirror symmetry}, ProQuest LLC, Ann Arbor, MI, 2010. Thesis (Ph.D.-)University of Michigan, MS2801653.
		
		\bibitem{Lip} J. Lipman, \textit{Residues and traces of differential forms via Hochschild homology}, Contemp. Math. \textbf{61} (1987), AMS, Providence.
		
		\bibitem{MS} R.H.~Makarim and M.~Stevens, \textit{M4GB: an efficient Gr\"obner-basis algorithm}, Proceedings of the 2017 ACM on International Symposium on Symbolic and Algebraic Computation (2017), 293--300.
		
		\bibitem{Ma1} E.~Martinec, \textit{Algebraic geometry and effective lagrangians}, Phys. Lett. B  \textbf{217} 4 (1989), 431--437.
		
		
		\bibitem{Ma2} E.~Martinec, \textit{Criticality, catastrophes and compactifications},  In Brink, L. (ed.) et al.: \textit{Physics and mathematics of strings} (1989), 389--433.
		
		\bibitem{MM} E.W.~Mayr and A.R.~Meyer, \textit{The complexity of the word problems for commutative semigroups and polynomial ideals}. Adv. Math. \textbf{46} (3), (1982) 305--329.
		
		\bibitem{NRC} R. Newton and A. Ros Camacho, \textit{Strangely dual orbifold equivalence I}, J. Sing. \textbf{14} (2016), 34--51.
		
		\bibitem{NRC2} R. Newton and A. Ros Camacho, \textit{ Orbifold autoequivalent exceptional unimodal singularities}, \href{http://arxiv.org/abs/1607.07081}{arXiv:1607.07081}.
		
		\bibitem{Ost} V. Ostrik, \textit{Module categories, weak Hopf algebras and modular invariants}, Transform. Groups \textbf{8} (2003), 177--206.
		
		\bibitem{Vai}A.~Polishchuk and A.~Vaintrob, \textit{Matrix factorizations and cohomological field theories}, J. Reine Angew. Math. \textbf{714} (2016), 1--122.
		
		\bibitem{RW} A.~Recknagel and P.~Weinreb, \textit{Orbifold equivalence: structure and new examples}, \href{https://arxiv.org/abs/1708.08359}{arXiv:1708.08359}.
		
		\bibitem{AnaContMath} A. Ros Camacho, \textit{On the Landau--Ginzburg/conformal field theory correspondence}, to appear at Proceedings of the International Conference on Vertex Operator Algebras and Number Theory (Sacramento State University), Contemp. Math. AMS., \href{https://arxiv.org/abs/1901.05365}{arXiv:1901.05365 [math.QA]}.
		
		\bibitem{RC} A. Ros Camacho, work in progress.
		
		\bibitem{Sa} K. Saito, \textit{Quasihomogene isolierte Singularit\"aten von Hyperfl\"achen}, Invent. Math. \textbf{14} (1971), 123--142.
		
		
		
		\bibitem{Sh} P.W.~Shor, \textit{Polynomial-time algorithms for prime factorization and discrete logarithms on a quantum computer}, SIAM \textbf{41} 2 (1999), 303--332.
		
		\bibitem{VW} C. Vafa and N. Warner, \textit{Catastrophes and the clasification of conformal theories}, Phys. Lett. B, \textbf{218} (1989), 51.
		
		\bibitem{Wi} E. Witten, \textit{Phases of $N=2$ theories in 2 dimensions}, Nucl.Phys. B \textbf{403} (1993), 159--222.
		
		\bibitem{MQ} T. Yasuda, X. Dahan, Y.-J. Huang, T. Takagi and K. Sakurai,
		\textit{MQ Challenge: Hardness Evaluation of Solving Multivariate Quadratic Problems}, NIST Workshop on Cybersecurity in a Post-Quantum World, Washington, D.C. April 2-3, 2015.
	\end{thebibliography}
\end{document}